\newcommand{\tnorm}{\@ifstar\@tnorms\@tnorm}
\newcommand{\@tnorms}[1]{%
  \left|\mkern-1.5mu\left|\mkern-1.5mu\left|
   #1
  \right|\mkern-1.5mu\right|\mkern-1.5mu\right|
}
\newcommand{\@tnorm}[2][]{%
  \mathopen{#1|\mkern-1.5mu#1|\mkern-1.5mu#1|}
  #2
  \mathclose{#1|\mkern-1.5mu#1|\mkern-1.5mu#1|}
}
\newcommand{\seminorm}[1]{%
  \left|
   #1
  \right|
}
\newcommand{\LRa}[1]{\left\langle #1 \right\rangle}
\newcommand{\lra}[1]{\langle #1 \rangle}
\newcommand{\nb}{\boldsymbol{n}}
\newcommand{\Phib}{\boldsymbol{\Phi}}
\newcommand{\qb}{\boldsymbol{q}}
\newcommand{\pb}{\boldsymbol{p}}
\newcommand{\rb}{\boldsymbol{r}}
\newcommand{\ubar}{\bar{u}}
\newcommand{\Vb}{\boldsymbol{V}}
\newcommand{\vbar}{\bar{v}}
\newcommand{\zbar}{\bar{z}}
\newcommand{\wb}{\boldsymbol{w}}
\newcommand{\Xb}{\boldsymbol{X}}
\newtheorem{remark}{Remark}[section]
\title{A hybridized discontinuous Galerkin method for Poisson-type
  problems with sign-changing coefficients \thanks{\funding{SR
      gratefully acknowledges support from the Natural Sciences and
      Engineering Research Council of Canada through the Discovery
      Grant program (RGPIN-05606-2015).}}}
\author{ Jeonghun~J.~Lee\thanks{Department of Mathematics, Baylor
    University, Waco, TX, USA (\url{jeonghun_lee@baylor.edu}),
    \url{http://orcid.org/0000-0001-5201-8526}} \and
  Sander~Rhebergen\thanks{Department of Applied Mathematics,
    University of Waterloo, Canada (\url{srheberg@uwaterloo.ca}),
    \url{http://orcid.org/0000-0001-6036-0356}}}
\begin{document}
\maketitle
\begin{abstract}
  In this paper, we present a hybridized discontinuous Galerkin (HDG)
  method for Poisson-type problems with sign-changing coefficients. We
  introduce a sign-changing stabilization parameter that results in a
  stable HDG method independent of domain geometry and the ratio of
  the negative and positive coefficients. Since the Poisson-type
  problem with sign-changing coefficients is not elliptic, standard
  techniques with a duality argument to analyze the HDG method cannot
  be applied. Hence, we present a novel error analysis exploiting the
  stabilized saddle-point problem structure of the HDG
  method. Numerical experiments in two dimensions and for varying
  polynomial degree verify our theoretical results.
\end{abstract}
\begin{keywords}
  HDG methods, sign-changing coefficients, error analysis
\end{keywords}
\begin{AMS}
  65N12, 65N15, 65N30
\end{AMS}
\section{Introduction}
\label{sec:introduction}

Partial differential equations (PDE) with sign-changing coefficients
are used to model physical phenomena of meta-materials, for example,
wave transmission problems between classical materials and
meta-materials. Therefore, there is an emerging interest in the
development of numerical methods for such PDEs.  However, the bilinear
forms for problems with sign-changing coefficients are not coercive,
so standard techniques relying on coercivity cannot be applied. The
lack of coercivity indeed poses fundamental challenges to study the
well-posedness of such PDEs, as well as the development of numerical
methods.

A popular approach in the study of numerical methods for PDEs with
sign-changing coefficients is the ``$T$-coercivity'' approach. This
approach utilizes a linear operator $T$ that recovers the coercivity
of the bilinear form. The existence of such an operator $T$ is a
natural assumption because the existence of $T$ is equivalent to the
well-posedness of the PDE in the sense of
Banach--Ne\v{c}as--Babu\v{s}ka in the functional analysis framework
\cite{Chesnel:2013}. The application of the $T$ operator to develop
numerical methods was first proposed in \cite{Zwolf:2010}. Since then,
the $T$-coercivity approach has been used in the development of
numerical methods for Poisson-type problems \cite{Chesnel:2013},
Helmholtz-type problems \cite{Chung:2013}, and Maxwell-type problems
\cite{Zwolf:2010, Dhia:2014, Chesnel:2014}.

In numerical methods that use $T$-coercivity, the $T$ operator cannot
be used directly, because the domain and the range of $T$ are not the
discrete spaces in general. For this reason, $T_h$ is introduced, a
discrete approximation of $T$, which has the coercivity recovery
property in the discrete setting. The existence of such $T_h$ is
non-trivial and difficult to guarantee because the form of $T$ is
unknown in general. For general geometry and meshes sufficient
conditions for the existence of $T_h$ are proposed in
\cite{Chesnel:2013}, but these include non-quantitative constants such
as a norm of the discrete interpolation of $T$ and the ratio of the
positive and negative coefficients (the contrast). It is furthermore
shown in \cite{Chesnel:2013} that locally symmetric meshes across the
transmission interface, where the coefficient changes sign, improve
the quality of numerical solutions. However, the generation of such
meshes is a non-trivial restriction in mesh generation for complex
interfaces.

Hybridization (or static condensation) was originally introduced for
mixed finite element methods to reduce computational cost
\cite{veubeke}, and it was further analyzed in
\cite{Arnold:1985}. More recently, hybridization was combined with the
discontinuous Galerkin (DG) method. This resulted in the hybridizable
discontinuous Galerkin (HDG) method which was shown to be more
efficient than standard DG methods. The HDG method was systematically
presented in \cite{Cockburn:2009} for elliptic partial differential
equations. Since its introduction, it has been extended to many
different problems. These include HDG methods for the Helmholtz
problem, e.g., \cite{Cui:2013, Griesmaier:2011} and the Maxwell
problem, e.g., \cite{Chen:2017, feng:2016}.

In this paper, we develop a numerical method for Poisson-type problems
with a sign-changing coefficient that {\it avoids} the $T$-coercivity
approach. The numerical method we present in this paper is always
well-posed {\it without} any conditions on the domain geometry or the
ratio between the negative and positive coefficients. This is achieved
by employing two key ideas for the problem: a mixed formulation of the
problem and using a hybridized discontinuous Galerkin (HDG) method
with {\it sign-changing} stabilization parameters. We will see that
the sign-changing stabilization parameter and the discontinuous test
function space in discontinuous Galerkin methods allow us to obtain
stable numerical methods without any non-quantitative assumptions on
mesh size or coefficients.

We also carry out an error analysis of the HDG method for this
problem. The error analysis of HDG methods typically uses the
Aubin--Nitsche duality argument with the full elliptic regularity
assumption. However, this assumption is not feasible in our setting,
because the PDE is not elliptic. By revealing a stabilized saddle
point problem structure of HDG methods, we circumvent this obstacle
and we can show an error estimate without using the duality
argument. We note that such analysis has been applied also to HDG
methods for standard Poisson-type problems to avoid the full elliptic
regularity assumption in the a priori error analysis (see
\cite{Cockburn:2014}).

The outline of this paper is as follows. In \cref{sec:problem} we
introduce the Poisson problem with a sign-changing coefficient. We
introduce the HDG method and discuss its well-posedness in
\cref{sec:hdg}. An a priori error analysis is presented in
\cref{sec:error_analysis}. In \cref{sec:superconvergence} we show that
a superconvergence result can be obtained when a suitable regularity
assumption is available. Our analysis is verified by numerical
examples in \cref{sec:num_examples}, while conclusions are drawn in
\cref{sec:conclusions}.

\section{The Poisson problem with a sign-changing coefficient}
\label{sec:problem}
Let $\Omega \subset \mathbb{R}^d$, $d \ge 2$, be a polygonal domain
that is divided into two subdomains $\Omega_+$ and $\Omega_-$ such
that
$\overline{\Omega} = \overline{\Omega}_+ \cup \overline{\Omega}_-$ and
$\Omega_+ \cap \Omega_- = \emptyset$. The boundaries of $\Omega$,
$\Omega_+$ and $\Omega_-$ are denoted by, respectively,
$\partial\Omega$, $\partial\Omega_+$ and $\partial\Omega_-$. The
interface separating the domains is denoted by
$\Gamma_I := \overline{\Omega}_+ \cap \overline{\Omega}_-$ and we
define furthermore $\Gamma_+ := \partial\Omega_+\backslash\Gamma_I$
and $\Gamma_- := \partial\Omega_-\backslash\Gamma_I$. We assume that
$\Gamma_D$ and $\Gamma_N$ are disjoint subsets of $\partial \Omega$
such that
$\partial \Omega = \overline{\Gamma_D} \cup \overline{\Gamma_N}$. The
outward unit normal vector field on $\partial\Omega$ is denoted by
$\nb$.

Let $\sigma$ be a scalar function defined as
\begin{equation}
  \sigma :=
  \begin{cases}
    \sigma_+ & \text{on}\ \Omega_+, \\
    \sigma_- & \text{on}\ \Omega_-,
  \end{cases}
\end{equation}
where $\sigma_+ > 0$ and $\sigma_- < 0$ are constants. The
\emph{contrast} is defined as $\kappa_{\sigma} := \sigma_-/\sigma_+$.
Throughout this paper we assume that 
\begin{align}
  \label{eq:sigma-min}
  0 < \sigma_{\min} \le  \sigma_+ ,\quad -\sigma_- \le \sigma_{\max} < +\infty.
\end{align}
We consider the following partial differential equation (PDE) for the
scalar $u : \Omega \to \mathbb{R}$:
\begin{subequations}
  \begin{align}
    \nabla \cdot \del{\sigma \nabla u} &= f && \text{ in } \Omega, \\
    u &= u_D && \text{ on } \Gamma_D, \\
    - \sigma \nabla u \cdot \nb &= u_N && \text{ on } \Gamma_N,
  \end{align}
  \label{eq:problem}
\end{subequations}
where $u_D : \Gamma_D \to \mathbb{R}$ and
$u_N : \Gamma_N \to \mathbb{R}$ are given boundary data and
$f : \Omega \to \mathbb{R}$ is a given source term. This is not a
standard second order PDE, because the sign of $\sigma$ is
indefinite. Let
$H^1_0(\Omega) := \cbr{v\in H^1(\Omega) \,:\, v=0 \text{ on }
  \partial\Omega}$ where $H^1(\Omega)$ is the standard Sobolev space
of $L^2$ functions such that the $L^2$-norm of their gradient is
bounded. The variational formulation for the problem (when
$u_D = u_N = 0$) is given by: Find $u \in H^1_0(\Omega)$ such that
\begin{equation}
  \label{eq:variationalForm}
  \int_{\Omega} \sigma \nabla u \cdot \nabla v \dif x
  = \int_{\Omega} f v \dif x \quad \forall v \in H^1_0(\Omega).
\end{equation}

It is known that this problem is not always well-posed, for example,
when $\kappa_{\sigma} = -1$, the problem is ill-posed.  The conditions
of well-posedness of the problem depends on the values of $\sigma$ and
the geometry of $\Omega_+$ and $\Omega_-$. For more information on the
well-posedness of this problem we refer to \cite{Chesnel:2013,
  Dhia:2012}.

\section{The HDG method}
\label{sec:hdg}
Introducing the auxiliary variable $\qb = -\sigma \nabla u$,
\cref{eq:problem} can be written as a system of first-order equations:
\begin{subequations}
  \begin{align}
    \sigma^{-1} \qb + \nabla u &= 0 && \text{ in } \Omega, \\
    \nabla \cdot \qb &= - f && \text{ in } \Omega, \\
    u &= u_D && \text{ on } \Gamma_D, \\
    - \sigma \nabla u \cdot \nb &= u_N && \text{ on } \Gamma_N.
  \end{align}
  \label{eq:problemmixed}
\end{subequations}
In this section we introduce the HDG method for \cref{eq:problemmixed}.

\subsection{Preliminaries}

Let $\mathcal{T}_h$ be a triangulation of $\Omega$ and $\mathcal{F}_h$
be the set of faces in the triangulation $\mathcal{T}_h$ with
co-dimension 1.  By $\mathcal{T}_h^+$ we denote the subset of
$\mathcal{T}_h$ such that its elements are in
$\Omega_+$. $\mathcal{T}_h^-$ is defined similarly. We assume that
$\mathcal{T}_h$ is a conforming triangulation with respect to
$\Gamma_I$, i.e., there is a subset $\mathcal{F}_h^i$ of
$\mathcal{F}_h$ such that
$\Gamma_I = \cup_{F \in \mathcal{F}_h^i} \overline{F}$. For later
reference we define $\mathcal{F}_h^+$ and $\mathcal{F}_h^-$ as the
subsets of interior facets of $\mathcal{F}_h$ such that the facets are
in $\Omega_+$ and $\Omega_-$, respectively. We also define
$\mathcal{F}_h^{\partial}$ as the facets on $\partial \Omega$, i.e.,
$\partial \Omega = \cup_{\mathcal{F}_h^{\partial}} \overline{F}$. In
summary, $\mathcal{F}_h$ is a disjoint union of
$\mathcal{F}_h^{\partial}$, $\mathcal{F}_h^+$, $\mathcal{F}_h^-$,
$\mathcal{F}_h^i$.  We also define $\mathcal{F}_h^D$ as the subset of
$\mathcal{F}_h$ such that
$\overline{\Gamma_D} = \cup_{F \in \mathcal{F}_h^D} \overline{F}$.

Let $K \in \mathcal{T}_h$, denote the diameter of $K$ by $h_K$, and
let $h = \max_{K\in\mathcal{T}_h}h_K$. We assume that there is no
$K \in \mathcal{T}_h$ such that $\partial K \subset \Gamma_I$. For
scalar functions $u, v \in L^2(K)$ and vector functions
$\qb, \rb \in L^2(K; \mathbb{R}^d)$ we denote, respectively,
$\del{u, v}_K := \int_K uv \dif x$ and
$\del{\qb, \rb}_K := \int_K \qb \cdot \rb \dif x$. Furthermore, for
two functions $\rb$ and $v$ with well-defined traces on $\partial K$,
we define
$\langle \rb \cdot \nb, v \rangle_{\partial K} := \int_{\partial K}
\rb \cdot \nb_K v \dif s$, where $\nb_K$ is the unit outward normal
vector field on $\partial K$. Additionally, we define
$\del{u, v}_{D} := \sum_{K\subset D} \del{u, v}_K$,
$\del{\qb, \rb}_{D} := \sum_{K\subset D} \del{\qb, \rb}_K$, and
$\langle \rb \cdot \nb, v \rangle_{\partial\mathcal{T}_h} := \sum_{K
  \in \mathcal{T}_h}\langle \rb \cdot \nb_K, v \rangle_{\partial K}$,
with similar definitions on $\mathcal{T}_h^+$ and $\mathcal{T}_h^-$.

We use the standard notation for Sobolev spaces based on the
$L^2$-norm, i.e., $H^s(D)$, $s \ge 0$, denotes the Sobolev space based
on the $L^2$-norm with $s$-differentiability on the domain $D$. We
refer to \cite{Evans:book} for a rigorous definition of this
space. The norm on $H^s(D)$ is denoted by $\norm{\cdot}_{s,D}$. When
$s=0$ we drop the subscript $s$.

To define the HDG method let 
\begin{align*}
  \Vb_h(K)  = \mathcal{P}_{k}(K; \mathbb{R}^d), \quad W_h(K) = \mathcal{P}_{k}(K), \quad  M_h(F) = \mathcal{P}_{k}(F),
\end{align*}
where $\mathcal{P}_k(D)$ and $\mathcal{P}_k(D;\mathbb{R}^d)$ are the
spaces of scalar and $d$-dimensional vector valued polynomials of
degree at most $k$ on a domain $D$. We will use the following finite
element spaces:
\begin{align}
  \Vb_h &= \cbr[0]{ \rb \in L^2(\mathcal{T}_h;\mathbb{R}^d) \;:\; \rb|_{K} \in \Vb_h(K), \quad \forall K \in \mathcal{T}_h }, \\
  W_h &= \cbr[0]{ v \in L^2(\mathcal{T}_h) \;:\; v|_{K} \in W_h(K), \quad \forall K \in \mathcal{T}_h }, \\
  M_{h} &= \cbr[0]{ \mu \in L^2(\mathcal{F}_h) \;:\; \mu|_{F} \in M_h(F), \quad \forall F \in \mathcal{F}_h, \quad \mu|_{\mathcal{F}_h^D} = 0 }.
\end{align}
%

\subsection{The discrete formulation}

For a sufficiently regular solution $(u, \boldsymbol{q})$ of the mixed
problem \cref{eq:problemmixed}, and for $\ubar$ the restriction of $u$
on $\mathcal{F}_h$, we can derive a system of variational equations
from \cref{eq:problemmixed} with test functions in
$\Vb_h \times W_h \times M_h$ as
\begin{subequations}
  \begin{align}
    \label{eq:exact-eq1}
    \del[0]{ \sigma^{-1} \qb, \rb }_{\Omega}
    + \del[0]{ \nabla u, \rb}_{\Omega}
    - \LRa{ u - \ubar, \rb \cdot \nb }_{\partial \mathcal{T}_h}
    &= 0  && \rb \in \Vb_h,
    \\
    \label{eq:exact-eq2}
    \del[0]{ \qb, \nabla v}_{\Omega}
    - \LRa{ \qb \cdot \nb + \tau (u - \ubar), v }_{\partial \mathcal{T}_h}
    &=  \del[0]{ f, v}_{\Omega} && v \in W_h,
    \\
    \label{eq:exact-eq3}
    \LRa{ \qb \cdot \nb + \tau (u - \ubar), \vbar}_{\partial \mathcal{T}_h \setminus \mathcal{F}_h^D}
    &= \LRa{ u_N, \vbar}_{ \mathcal{F}_h^N} 
         && \vbar \in M_{h} .
  \end{align}
  \label{eq:exact}
\end{subequations}
Here we use $\tau$ to denote a piecewise constant function adapted to
$\mathcal{F}_h$. Throughout this paper we assume that
\begin{align} 
  \label{eq:tau-condition}
  \tau|_F = 
  \begin{cases}
      0 & \text{ if } F \subset \Gamma_I, \\
    > 0 & \text{ if } F \subset \partial K, K \subset \Omega_+, F \not \subset \Gamma_I, \\
    < 0 & \text{ if } F \subset \partial K, K \subset \Omega_-, F \not \subset \Gamma_I.  
  \end{cases}
\end{align}
Our HDG method for \cref{eq:problemmixed} is the
discrete counterpart of this system of variational equations, i.e., we
find $(\qb_h, u_h, \ubar_h) \in \Vb_h \times W_h \times M_h$ such that
\begin{subequations}
  \begin{align}
    \label{eq:hdg-eq1}
    \del[0]{ \sigma^{-1} \qb_h, \rb }_{\Omega}
    + \del[0]{ \nabla u_h, \rb}_{\Omega}
    - \LRa{ u_h - \ubar_h, \rb \cdot \nb }_{\partial \mathcal{T}_h}
    &= - \LRa{u_D, \rb \cdot \nb}_{\mathcal{F}_h^D}  && \rb \in \Vb_h,
    \\
    \label{eq:hdg-eq2}
    \del[0]{ \qb_h, \nabla v}_{\Omega}
    - \LRa{ \qb_h \cdot \nb + \tau (u_h - \ubar_h), v }_{\partial \mathcal{T}_h}
    &= - \LRa{ \tau u_D, v}_{\mathcal{F}_h^D} + \del[0]{ f, v}_{\Omega} && v \in W_h,
    \\
    \label{eq:hdg-eq3}
    \LRa{ \qb_h \cdot \nb + \tau (u_h - \ubar_h), \vbar}_{\partial \mathcal{T}_h}
    &= \LRa{ u_N, \vbar}_{ \mathcal{F}_h^N} 
         && \vbar \in M_{h} .
  \end{align}
  \label{eq:hdgmethod}
\end{subequations}
Here equations \cref{eq:exact-eq1}, \cref{eq:exact-eq2} and the
discrete equations \cref{eq:hdg-eq1}, \cref{eq:hdg-eq2} look
inconsistent. This is because we impose the restriction
$\mu|_{\mathcal{F}_h^D} = 0$ on $M_h$ so that we can take the same
trial and test function spaces in our numerical method. This choice
will be useful to reveal a stabilized saddle point structure of our
numerical method, and also allow us to obtain optimal error estimates
without the Aubin--Nitsche duality argument.  To clarify the
consistency between \cref{eq:exact-eq1}, \cref{eq:exact-eq2} and
\cref{eq:hdg-eq1}, \cref{eq:hdg-eq2}, we point out that
\cref{eq:hdg-eq1}, \cref{eq:hdg-eq2} with $\bar{u}_h \in \tilde{M}_h$
are equivalent to the discretized forms of \cref{eq:exact-eq1},
\cref{eq:exact-eq2} with $\tilde{M}_h$ defined by
\begin{multline*}
  \tilde{M}_{h} = \\
  \cbr[0]{ \mu \in L^2(\mathcal{F}_h)\, : \, \mu|_{F} \in M_h(F)
    \ \forall F \in \mathcal{F}_h,
    \ \langle \mu, \lambda \rangle_{\mathcal{F}_h^D} = \langle u_D, \lambda \rangle_{\mathcal{F}_h^D}
    \ \forall \lambda \in M_h(\mathcal{F}_h^D) } .
\end{multline*}

For later use we define
\begin{equation}
  \begin{split}
    a_h(\qb, \rb)
    &:= \del[0]{\sigma^{-1} \qb, \rb}_{\Omega}, 
    \\
    b_h(v, \vbar; \rb)
    &:=
    \del[0]{\nabla v, \rb}_{\Omega}
    - \LRa{ v - \vbar, \rb\cdot \nb}_{\partial \mathcal{T}_h},
    \\
    c_h(u, \ubar; v, \vbar)
    &:=
    \LRa{ \tau (u - \ubar), v - \vbar }_{\partial \mathcal{T}_h}.
  \end{split}
\end{equation}
Then the sum of the left-hand sides of \cref{eq:hdgmethod} can be
rewritten as
\begin{equation}
  \label{eq:ah-form}
  B_h(\qb_h, u_h, \ubar_h; \rb, v, \vbar) := a_h(\qb_h, \rb) + b_h(u_h, \ubar_h; \rb)
  + b_h(v, \vbar; \qb_h) - c_h(u_h, \ubar_h; v, \vbar).
\end{equation}

For brevity, but without loss of generality, we assume $u_D = u_N = 0$
and $\Gamma_D \ne \emptyset$ in the remainder of this paper.

In the following lemma we prove well-posedness of the HDG method
\cref{eq:hdgmethod}.

\begin{lemma}[Well-posedness]
  Let $k\ge 0$. If $\tau$ satisfies \cref{eq:tau-condition}, then
  there exists a unique solution
  $(\qb_h, u_h, \ubar_h) \in \Vb_h \times W_h \times M_h$ to
  \cref{eq:hdgmethod}.
\end{lemma}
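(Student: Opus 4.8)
The plan is to prove well-posedness by exploiting the fact that \cref{eq:hdgmethod} is a finite-dimensional square linear system. Since the spaces $\Vb_h \times W_h \times M_h$ are finite dimensional and the system has the same number of equations as unknowns, existence of a solution for every right-hand side is equivalent to uniqueness. Hence it suffices to show that the only solution of the homogeneous system (with $u_D = u_N = 0$ and $f = 0$) is the trivial one. In the notation of \cref{eq:ah-form}, I would assume $(\qb_h, u_h, \ubar_h)$ satisfies $B_h(\qb_h, u_h, \ubar_h; \rb, v, \vbar) = 0$ for all test functions, and aim to conclude $\qb_h = 0$, $u_h = 0$, $\ubar_h = 0$.

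The first concrete step is to test with the solution itself: take $\rb = \qb_h$, $v = u_h$, $\vbar = \ubar_h$. The off-diagonal $b_h$ terms cancel against each other, leaving
\begin{equation*}
  a_h(\qb_h, \qb_h) - c_h(u_h, \ubar_h; u_h, \ubar_h) = 0,
\end{equation*}
that is, $(\sigma^{-1}\qb_h, \qb_h)_\Omega = \LRa{\tau(u_h - \ubar_h), u_h - \ubar_h}_{\partial\mathcal{T}_h}$. This is exactly where the sign-changing structure enters: on $\Omega_+$ we have $\sigma^{-1} > 0$ and, by \cref{eq:tau-condition}, $\tau > 0$, so both terms there carry a positive sign; on $\Omega_-$ both $\sigma^{-1}$ and $\tau$ are negative, so splitting the identity over $\Omega_+$ and $\Omega_-$ and moving the negative pieces to the opposite side should yield a genuine sum of squares. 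The key algebraic point is that $a_h$ and $c_h$ have \emph{opposite} sign but $\sigma$ and $\tau$ \emph{also} change sign consistently, so that after the split every surviving term is a nonnegative square. This forces $\qb_h = 0$ on all of $\Omega$ and $\tau(u_h - \ubar_h) = 0$ on the faces where $\tau \ne 0$; since $\tau$ vanishes only on $\Gamma_I$, we obtain $u_h = \ubar_h$ on $\partial\mathcal{T}_h \setminus \mathcal{F}_h^i$.

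With $\qb_h = 0$ in hand, the remaining step is to show $u_h = 0$ and $\ubar_h = 0$. Substituting $\qb_h = 0$ into \cref{eq:hdg-eq1} leaves $b_h(u_h, \ubar_h; \rb) = 0$ for all $\rb \in \Vb_h$, i.e.\ $(\nabla u_h, \rb)_\Omega = \LRa{u_h - \ubar_h, \rb\cdot\nb}_{\partial\mathcal{T}_h}$. Integrating by parts elementwise, this says that on each element $K$ the local quantity behaves like a discrete gradient of $u_h$ tested against the full polynomial space $\Vb_h(K) = \mathcal{P}_k(K;\mathbb{R}^d)$, which should force $\nabla u_h = 0$ on each $K$ together with $u_h = \ubar_h$ on $\partial K$. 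Thus $u_h$ is piecewise constant, and agrees with the single-valued trace $\ubar_h$ across every face where $\tau \ne 0$, so $u_h$ is globally constant; since $\ubar_h = 0$ on $\Gamma_D$ (from $M_h$ vanishing on $\mathcal{F}_h^D$) and $\Gamma_D \ne \emptyset$, this constant is zero, giving $u_h \equiv 0$ and then $\ubar_h \equiv 0$.

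I expect the main obstacle to be the treatment of the interface $\Gamma_I$, where $\tau = 0$. On those faces the energy identity gives no control on $u_h - \ubar_h$, so the argument that $u_h$ is single-valued and constant must instead propagate the information across $\Gamma_I$ through $\qb_h = 0$ and the continuity encoded in \cref{eq:hdg-eq3}. Care is needed to verify that $\ubar_h$ is still uniquely pinned down on the interface faces (one must use the $\qb_h\cdot\nb$-coupling in the third equation, since $\tau$ alone provides no penalization there) and that no spurious nonzero interface trace survives. The rest of the argument, namely the elementwise integration by parts showing $\nabla u_h = 0$, is standard for HDG methods and should follow the pattern of \cite{Cockburn:2009}.
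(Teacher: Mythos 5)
Your overall strategy (the homogeneous square system has only the trivial solution, hence existence and uniqueness) and your concluding propagation argument are in line with the paper's proof, but the energy-identity step at the heart of your proposal fails, and it fails in a way that cannot be patched without the paper's key idea. First, with the form \cref{eq:ah-form} the off-diagonal terms do \emph{not} cancel when you test with the solution itself: $B_h$ is symmetric, with $b_h$ carrying a plus sign in both slots, so $(\rb,v,\vbar)=(\qb_h,u_h,\ubar_h)$ yields
\begin{equation*}
  a_h(\qb_h,\qb_h) + 2\,b_h(u_h,\ubar_h;\qb_h) - c_h(u_h,\ubar_h;u_h,\ubar_h) = 0,
\end{equation*}
with a surviving cross term. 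You could remove the cross term by negating \cref{eq:hdg-eq2} and \cref{eq:hdg-eq3} (skew-symmetrizing), but then the stabilization enters with a plus sign and the identity becomes $a_h(\qb_h,\qb_h) + c_h(u_h,\ubar_h;u_h,\ubar_h)=0$, not your claimed $a_h - c_h = 0$. Second, and decisively: whichever of these identities you use, it gives nothing, precisely because $\sigma^{-1}$ and $\tau$ change sign \emph{together}. Writing $A_\pm := \pm\del[0]{\sigma^{-1}\qb_h,\qb_h}_{\Omega_\pm}\ge 0$ and $C_\pm := \pm\LRa{\tau(u_h-\ubar_h),u_h-\ubar_h}_{\partial\mathcal{T}_h^\pm}\ge 0$, the skew-symmetrized identity reads $(A_+-A_-)+(C_+-C_-)=0$, i.e.\ $A_+ + C_+ = A_- + C_-$. ``Moving the negative pieces to the other side'' produces an equality between two nonnegative numbers, not a sum of squares equal to zero; the $\Omega_+$ and $\Omega_-$ contributions are free to cancel each other, and you cannot conclude $\qb_h=0$ or $u_h=\ubar_h$. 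This is exactly the loss of coercivity that makes the problem nontrivial: if testing with the solution worked, the sign-changing stabilization would be unnecessary.

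The missing idea --- the crux of the paper's proof --- is to exploit the discontinuity of the test spaces and flip the signs of the test functions on one subdomain: take $\rb=\chi_{\Omega_+}\qb_h-\chi_{\Omega_-}\qb_h$ and take $(v,\vbar)$ with the opposite sign pattern, i.e.\ $\mp$ on $\Omega_\pm$ (this $\vbar$ belongs to $M_h$ because the flipped values cancel on $\Gamma_I$, so $\vbar=0$ there). Within each subdomain the signs of $\rb$ and of $(v,\vbar)$ are opposite, so the $b_h$ terms cancel subdomain by subdomain, and adding the equations gives the two separate identities
\begin{align*}
  \del[0]{\sigma^{-1}\qb_h,\qb_h}_{\Omega_+} + \LRa{\tau(u_h-\ubar_h),u_h-\ubar_h}_{\partial\mathcal{T}_h^+} &= 0, \\
  -\del[0]{\sigma^{-1}\qb_h,\qb_h}_{\Omega_-} - \LRa{\tau(u_h-\ubar_h),u_h-\ubar_h}_{\partial\mathcal{T}_h^-} &= 0,
\end{align*}
in each of which \emph{every} term is nonnegative by \cref{eq:tau-condition}; only then do $\qb_h=0$ and $u_h=\ubar_h$ away from $\Gamma_I$ follow, after which your final argument goes through. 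One further correction to that final step: once $\qb_h=0$, equation \cref{eq:hdg-eq3} is vacuous on $\Gamma_I$ (both $\qb_h\cdot\nb$ and $\tau$ vanish there), so it cannot pin down $\ubar_h$ on the interface as you suggest; it is \cref{eq:hdg-eq1}, with $\rb$ chosen elementwise on either side of $\Gamma_I$, that forces $u_h=\ubar_h$ on $\Gamma_I$, hence a single global constant, which vanishes by the condition on $\Gamma_D$.
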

\begin{proof}
  To show well-posedness of \cref{eq:hdgmethod} assume that $f =
  0$. We will show that $\qb_h$, $u_h$, $\ubar_h$ vanish.
  
  Let $\chi_{D}$ be the characteristic function that has the value 1
  in domain $D$ and 0 elsewhere. Then let
  $\rb = \chi_{\Omega_+} \qb_h - \chi_{\Omega_-} \qb_h$,
  $v = - \chi_{\Omega_+} u_h + \chi_{\Omega_-} u_h$,
  $\vbar = \chi_{\overline{\Omega_+}} \ubar_h -
  \chi_{\overline{\Omega_-}} \ubar_h$ in \cref{eq:hdgmethod}, and add
  all the equations. (Note that $\vbar = 0$ on $\Gamma_I$.) The
  evaluation of \cref{eq:hdgmethod} with the first components of these
  test functions gives
  \begin{align*}
    \del[0]{ \sigma^{-1} \qb_h, \qb_h}_{\Omega_+}
    + \LRa{ \tau (u_h - \ubar_h), u_h - \ubar_h}_{\partial \mathcal{T}_h^+ }
    = 0. 
  \end{align*}
  Similarly, the evaluation of \cref{eq:hdgmethod} with 
  the second components of the above test functions gives 
  \begin{align*}
    - \del[0]{\sigma^{-1} \qb_h, \qb_h}_{\Omega_-}
    - \LRa{ \tau (u_h - \ubar_h), u_h - \ubar_h}_{\partial \mathcal{T}_h^- }
    = 0. 
  \end{align*}
  Since $\sigma^{-1}$ and $\tau$ are positive and negative on
  $\Omega_+$ and $\Omega_-$, respectively, we can conclude that
  $\qb_h = 0$ on $\Omega$ and $u_h = \ubar_h$ on
  $\mathcal{F}_h^+ \cup \mathcal{F}_h^- \cup
  \mathcal{F}_h^{\partial}$. This also implies that $u_h$ is
  continuous on $\Omega_+$ and on $\Omega_-$, respectively. It then
  follows from \cref{eq:hdg-eq1} that $\nabla u_h = 0$, i.e., $u_h$ is
  constant on $\Omega_+$ and on $\Omega_-$, respectively. Since
  $\qb_h = 0$, $\nabla u_h = 0$, and $u_D = 0$, \cref{eq:hdg-eq1}
  gives
  $\lra{ u_h - \ubar_h, \rb \cdot \nb }_{\partial \mathcal{T}_h}=0$
  for all $\rb \in \Vb_h$, implying $u_h = \ubar_h$ also on
  $\Gamma_I$. Given now that $u_h$ is constant on $\Omega$, we obtain
  that $u_h = \ubar_h = 0$ because $u_h = \ubar_h = 0$ on $\Gamma_D$.
  %
  %
\end{proof}

We consider the following norm and seminorm on $\Vb_h$ and
$W_h \times M_h$:
\begin{align*}
  \norm[0]{ \rb }_{\Vb_h}^2 &= \del[0]{\sigma^{-1} \rb, \rb}_{\Omega_+} - \del[0]{\sigma^{-1} \rb, \rb}_{\Omega_-}, \\
  \seminorm{ (v, \vbar) }_{W_h \times M_h}^2 &=  \del[0]{ v , v}_{\Omega} 
  + \LRa{ |\tau| \del[0]{v - \vbar}, v - \vbar}_{\partial \mathcal{T}_h},
\end{align*}
where $|\tau|$ is the absolute value of $\tau$. Additionally, we
define $\Xb_h := \Vb_h \times W_h \times M_h$ with seminorm
\begin{align*}
  \seminorm{(\rb, v, \vbar)}_{\Xb_h} := \del[1]{\, \norm[0]{\rb}_{\Vb_h}^2 + \seminorm{(v, \vbar)}_{W_h \times M_h}^2 }^{\frac 12} .
\end{align*}
Furthermore, in the remainder of this paper we let $C>0$ be a constant
independent of the mesh size $h$.

\begin{lemma} 
  \label{lemma:div-surjective}
  There exists $C_{\Omega} >0$ which depends only on $\Omega$ and
  $\Gamma_D$ such that for $v \in W_h$ there exists
  $\wb \in H^1(\Omega; \Bbb{R}^d)$ satisfying $\nabla \cdot \wb = v$
  and $\norm{ \wb }_{1,\Omega} \leq C_{\Omega} \norm{ v }_{\Omega}$
  with $\wb \cdot \nb = 0$ on $\Gamma_N$.
\end{lemma}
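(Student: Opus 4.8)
\emph{Proof strategy and the enlarged domain.} The plan is to reduce the statement to the classical surjectivity of the divergence operator on bounded Lipschitz domains (the Bogovskii right inverse of $\nabla\cdot$), arranging the geometry so that the condition $\wb\cdot\nb=0$ on $\Gamma_N$ comes for free. The observation driving this is that $\wb$ is \emph{unconstrained} on $\Gamma_D$ and $\Gamma_D\neq\emptyset$, so we may enlarge $\Omega$ across $\Gamma_D$ and ask instead for a field vanishing on the \emph{entire} boundary of the enlarged domain; its restriction to $\Omega$ then vanishes on $\Gamma_N$, the only part of $\partial\Omega$ surviving as new boundary. Concretely, I would first build a bounded Lipschitz domain $\Omega^{\sharp}\supset\Omega$ by attaching a thin collar $S$ of fixed thickness to the outside of $\Gamma_D$, shaped near the junction set $\overline{\Gamma_D}\cap\overline{\Gamma_N}$ so that $\Omega^{\sharp}=\mathrm{int}(\overline{\Omega}\cup\overline{S})$ stays Lipschitz. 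Since $\partial\Omega=\overline{\Gamma_D}\cup\overline{\Gamma_N}$ and the collar is glued only along $\Gamma_D$, the piece $\Gamma_D$ becomes interior to $\Omega^{\sharp}$ while $\Gamma_N\subset\partial\Omega^{\sharp}$. The whole construction of $\Omega^{\sharp}$ (hence of $S$ and $\abs{S}$) depends only on $\Omega$ and $\Gamma_D$.

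\emph{A mean-free extension of $v$.} Bogovskii's operator produces a solution of $\nabla\cdot\wb^{\sharp}=v^{\sharp}$ that is zero on all of $\partial\Omega^{\sharp}$, which forces the datum $v^{\sharp}$ to have zero mean. I would therefore extend $v$ to $v^{\sharp}\in L^2(\Omega^{\sharp})$ by $v^{\sharp}=v$ on $\Omega$ and $v^{\sharp}\equiv c$ on $S$, with $c$ chosen so that $\int_{\Omega^{\sharp}}v^{\sharp}\dif x=0$, i.e. $c=-\abs{S}^{-1}\int_{\Omega}v\dif x$. By Cauchy--Schwarz this gives $\norm{v^{\sharp}}_{\Omega^{\sharp}}\le C\norm{v}_{\Omega}$ with $C$ depending only on $\abs{\Omega}$ and $\abs{S}$.

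\emph{Bogovskii and restriction.} Since $\Omega^{\sharp}$ is a bounded Lipschitz domain and $v^{\sharp}$ is mean-free, the Bogovskii construction yields $\wb^{\sharp}\in H^1_0(\Omega^{\sharp};\mathbb{R}^d)$ with $\nabla\cdot\wb^{\sharp}=v^{\sharp}$ and $\norm{\wb^{\sharp}}_{1,\Omega^{\sharp}}\le C_{\Omega^{\sharp}}\norm{v^{\sharp}}_{\Omega^{\sharp}}$. Setting $\wb:=\wb^{\sharp}|_{\Omega}$ gives $\nabla\cdot\wb=v$ in $\Omega$; because $\wb^{\sharp}$ vanishes on $\partial\Omega^{\sharp}\supset\Gamma_N$ we obtain $\wb=0$, in particular $\wb\cdot\nb=0$, on $\Gamma_N$; and $\norm{\wb}_{1,\Omega}\le\norm{\wb^{\sharp}}_{1,\Omega^{\sharp}}$. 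Chaining the three bounds yields the claim with $C_{\Omega}$ depending only on $\Omega$ and $\Gamma_D$.

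\emph{Main obstacle.} The technical heart is the collar construction: making $\Omega^{\sharp}$ genuinely Lipschitz near $\overline{\Gamma_D}\cap\overline{\Gamma_N}$ while keeping $\Gamma_N$ on $\partial\Omega^{\sharp}$ and $\Gamma_D$ interior. I would deliberately avoid the more direct route of solving the mixed problem $-\Delta\phi=v$, $\phi|_{\Gamma_D}=0$, $\partial_{\nb}\phi|_{\Gamma_N}=0$ and taking $\wb=-\nabla\phi$: this realizes the boundary conditions effortlessly but only gives $\phi\in H^1$, and on a polygonal domain with mixed data the $H^2$-regularity needed to place $\wb$ in $H^1(\Omega;\mathbb{R}^d)$ can fail at reentrant corners and Dirichlet--Neumann junctions. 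The extension argument sidesteps this entirely, since the divergence right inverse delivers an $H^1$ field on any bounded Lipschitz domain.
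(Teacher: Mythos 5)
Your proposal is correct in substance but takes a genuinely different route from the paper's proof. Both arguments ultimately rest on the same black box---the right inverse of the divergence on a Lipschitz domain from Galdi's book---but you invoke it only with homogeneous boundary data (plain Bogovskii) on an \emph{enlarged} domain $\Omega^{\sharp}$ obtained by gluing a collar across $\Gamma_D$, whereas the paper keeps the domain fixed and uses the full strength of Galdi's result with \emph{inhomogeneous} Dirichlet data: it splits $v = v_0 + v_m$ into mean-free part and mean, solves $\nabla \cdot \wb_0 = v_0$ with $\wb_0 \in H^1_0(\Omega;\mathbb{R}^d)$, and absorbs the mean by prescribing boundary data $\boldsymbol{\phi}_0 \in H^{1/2}(\partial\Omega)$ supported on $\Gamma_D$ carrying the required flux $\int_{\Gamma_D} \boldsymbol{\phi}_0 \cdot \nb \dif s = \int_{\Omega} v_m \dif x$; your constant-on-the-collar extension of $v$ plays exactly the role of this renormalized flux. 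The trade-off is where the geometric work sits. The paper only needs a single normal-directed bump supported in the interior of one face of $\Gamma_D$, which is immediate on a polygonal/polyhedral domain. Your construction instead needs $\Omega^{\sharp} = \mathrm{int}(\overline{\Omega}\cup\overline{S})$ to be Lipschitz near the junction $\overline{\Gamma_D}\cap\overline{\Gamma_N}$, which you rightly flag as the technical heart but leave as a sketch: in $d=2$ a wedge-shaped taper always works (polygon corners have interior angle $<2\pi$, so a wedge of small positive angle fits), but the lemma is stated for all $d \ge 2$, and making the tapered collar Lipschitz along a junction polyline on a polyhedron requires genuine additional work---precisely the work the paper's flux-through-$\Gamma_D$ device avoids. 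Your closing remark about rejecting the mixed Dirichlet--Neumann route (failure of $H^2$ regularity at corners and D--N junctions) is well taken and matches the paper's implicit reasoning; with the collar construction carried out in detail, your argument would be a valid, and somewhat more self-contained, alternative in that it needs only the homogeneous-data version of the divergence lemma.
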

\begin{proof}
  We will use a result in \cite[p.176]{Galdi:2011} which claims that
  for any given $f \in L^2(\Omega)$ and
  $\boldsymbol{a} \in H^{\frac 12}(\partial \Omega)$ satisfying
  $\int_{\partial \Omega} \boldsymbol{a} \cdot \nb \dif s =
  \int_{\Omega} f \dif x$, there exists
  $\wb \in H^1(\Omega; \Bbb{R}^d)$ such that $\nabla \cdot \wb = f$,
  $\wb = \boldsymbol{a}$ on $\partial \Omega$, and
  $\norm{ \wb }_{H^1(\Omega)} \leq C \norm{ f }_{L^2(\Omega)} +
  \norm{ \boldsymbol{a} }_{H^{\frac 12}(\partial \Omega)}$.

  We first note that if $z \in L^2(\Omega)$ has mean value zero, then
  there exists $\wb \in H^1_0(\Omega; \mathbb{R}^d)$ such that
  $\nabla \cdot \wb = z$ and
  $\norm{\wb}_{H^1(\Omega)} \le C \norm{z}_{L^2(\Omega)}$ by taking
  $\boldsymbol{a} = \boldsymbol{0}$ in the above result.

  Consider now a decomposition $v = v_0 + v_m$ with
  $v_m = \chi_{\Omega} \frac{1}{|\Omega|}\int_{\Omega} v \dif x$ where
  $|\Omega|$ is the Lebesgue measure of $\Omega$, and where $v_0$ is
  the component of $v$ with mean-value zero. It is clear that
  $\norm{ v_0 }_{L^2(\Omega)} + \norm{ v_m }_{L^2(\Omega)} \leq C
  \norm{ v }_{L^2(\Omega)}$. As just noted, there exists
  $\wb_0 \in H_0^1(\Omega; \Bbb{R}^d)$ such that
  $\nabla \cdot \wb_0 = v_0$ and
  $\norm{ \wb_0 }_{H^1(\Omega)} \leq C \norm{ v_0 }_{L^2(\Omega)}$.
  Since $\Omega$ is a polygonal/polyhedral domain, one can find a
  function $\boldsymbol{\phi} \in H^{1/2}(\partial \Omega)$ whose
  support resides on $\Gamma_D$,
  $\int_{\Gamma_D} \boldsymbol{\phi} \cdot \nb \dif s = c_0 > 0$, and
  $\norm{ \boldsymbol{\phi} }_{H^{\frac 12}(\partial \Omega)} = 1$.
  We can find $\boldsymbol{\phi}_0$, a renormalization of
  $\boldsymbol{\phi}$, satisfying
  $\int_{\Gamma_D} \boldsymbol{\phi}_0 \cdot \nb \dif s =
  \int_{\Omega} v_m \dif x$, and it holds that
  \begin{equation*}
    \norm{ \boldsymbol{\phi}_0 }_{H^{\frac 12}(\partial \Omega)}
    \le
    \frac{\norm{ v_m }_{L^1(\Omega)}}{c_0}
    \leq C
    \norm{ v_m }_{L^2(\Omega)},
  \end{equation*}
  with a constant that depends on $|\Omega|$ and $c_0$. Applying the
  result in \cite[p.176]{Galdi:2011} there exists
  $\wb_1 \in H^1(\Omega; \Bbb{R}^d)$ such that
  $\nabla \cdot \wb_1 = v_m$, $\wb_1 = \boldsymbol{\phi}_0$ on
  $\partial \Omega$, and
  $\norm{ \wb_1 }_{H^1(\Omega)} \leq C
  \norm{\boldsymbol{\phi}_0}_{H^{1/2}(\partial \Omega)} + \norm{ v_m
  }_{L^2(\Omega)} \leq C \norm{ v_m }_{L^2(\Omega)}$. Then
  $\wb = \wb_0 + \wb_1$ is a function in $H^1(\Omega; \Bbb{R}^d)$
  satisfying the desired conditions.
\end{proof}

Before we prove an inf-sup condition, that we use in the error
analysis in \cref{sec:error_analysis}, let us recall a result from
\citep[Proposition~2.1]{Cockburn-Dong:2008}: For a simplex $K$ and a
fixed facet $F_K$ of $K$, there is an interpolation operator
$\Pi_h' : H^1(K; \mathbb{R}^d) \rightarrow \Vb_h(K)$ such that
\begin{align*}
  \del[0]{\Pi_h' \rb, \rb'}_K &= \del[0]{ \rb, \rb'}_K & & \forall \rb' \in \mathcal{P}_{k-1}(K; \mathbb{R}^d) \quad \text{ if } k > 0, \\
  \lra{ \Pi_h' \rb \cdot \nb, w }_F &= \lra{ \rb \cdot \nb, w }_F & & \forall w \in \mathcal{P}_k (F) \quad \text{ if } F \not = F_K.
\end{align*}
This interpolation operator $\Pi_h'$ also satisfies
$\norm[0]{ (\rb - \Pi_h' \rb) \cdot \nb }_{\partial K} \le C h_K^{1/2}
\norm[0]{\rb}_{H^1(K)}$.

\begin{theorem}[inf-sup condition]
  \label{thm:infsupcondition}
  Suppose that $k \geq 0$.  If $\tau$ satisfies
  \cref{eq:tau-condition} and $\gamma_0 \le |\tau| \le \gamma_1$ for
  $\tau \not = 0$ on $\mathcal{F}_h$ with positive constants
  $\gamma_0$ and $\gamma_1$ independent of $h$, then there exists a
  positive constant $\beta$ independent of $h$ such that
  \begin{align}
    \label{eq:Bh-inf-sup}
    \inf_{(\pb, z, \zbar) \in \Xb_h } \sup_{(\rb, v, \vbar) \in \Xb_h }
    \frac{B_h(\pb, z, \zbar; \rb, v, \vbar)}{\seminorm{ (\pb, z, \zbar) }_{\Xb_h} \seminorm{ (\rb, v, \vbar) }_{\Xb_h} } \ge \beta >0.
  \end{align}
  Here we assume that $\seminorm{ (\pb, z, \zbar) }_{\Xb_h}$,
  $\seminorm{ (\rb, v, \vbar) }_{\Xb_h} > 0$.
\end{theorem}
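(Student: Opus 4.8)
The plan is to establish \cref{eq:Bh-inf-sup} by the standard device for stabilized saddle-point problems: given $(\pb, z, \zbar) \in \Xb_h$, I will exhibit a single test triple $(\rb, v, \vbar) \in \Xb_h$, built as a linear combination of two explicit pieces, show that $B_h(\pb, z, \zbar; \rb, v, \vbar) \ge c\,\seminorm{(\pb,z,\zbar)}_{\Xb_h}^2$, and show that $\seminorm{(\rb,v,\vbar)}_{\Xb_h} \le C \seminorm{(\pb,z,\zbar)}_{\Xb_h}$; dividing then yields the inf-sup constant $\beta = c/C$.

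\emph{First piece (energy test function).} Let $s := \chi_{\Omega_+} - \chi_{\Omega_-}$ and take $(\rb_1, v_1, \vbar_1) := (s\pb,\, -sz,\, -s\zbar)$, the sign-flip already used in the proof of the well-posedness lemma. A direct computation shows the two $b_h$-contributions cancel identically, while $a_h(\pb, s\pb) = \norm{\pb}_{\Vb_h}^2$ and $-c_h(z,\zbar; -sz, -s\zbar) = \LRa{|\tau|(z - \zbar), z - \zbar}_{\partial\mathcal{T}_h}$ because $s\tau = |\tau|$ on every face with $\tau \ne 0$. Hence $B_h(\pb, z, \zbar; \rb_1, v_1, \vbar_1) = \norm{\pb}_{\Vb_h}^2 + \LRa{|\tau|(z-\zbar), z-\zbar}_{\partial\mathcal{T}_h}$, and since the sign flip preserves both norms, $\seminorm{(\rb_1, v_1, \vbar_1)}_{\Xb_h} = \seminorm{(\pb, z, \zbar)}_{\Xb_h}$. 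This piece controls everything in the $\Xb_h$-seminorm except the volume term $\norm{z}_{\Omega}^2$.

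\emph{Second piece (recovering $\norm{z}_\Omega$).} Apply \cref{lemma:div-surjective} to obtain $\wb \in H^1(\Omega;\mathbb{R}^d)$ with $\nabla\cdot\wb = z$, $\wb\cdot\nb = 0$ on $\Gamma_N$, and $\norm{\wb}_{1,\Omega} \le C_\Omega \norm{z}_\Omega$, and set $(\rb_2, v_2, \vbar_2) := (-\Pi_h'\wb,\, 0,\, 0)$. Integrating by parts elementwise gives $b_h(z, \zbar; \rb) = -\del{z, \nabla\cdot\rb}_\Omega + \LRa{\zbar, \rb\cdot\nb}_{\partial\mathcal{T}_h}$; taking $\rb = \wb$ first, the boundary term vanishes (normal continuity of $\wb$ together with $\zbar = 0$ on $\Gamma_D$ and $\wb\cdot\nb = 0$ on $\Gamma_N$), so $b_h(z,\zbar;\wb) = -\norm{z}_\Omega^2$. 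Replacing $\wb$ by $\Pi_h'\wb$, the volume discrepancy $\del{\nabla z, \wb - \Pi_h'\wb}_\Omega$ vanishes by the moment property of $\Pi_h'$ (trivially for $k=0$, since $\nabla z = 0$), leaving only $\LRa{z - \zbar, (\wb - \Pi_h'\wb)\cdot\nb}_{\partial\mathcal{T}_h}$, whence $b_h(z, \zbar; -\Pi_h'\wb) = \norm{z}_\Omega^2 - \LRa{z-\zbar, (\wb - \Pi_h'\wb)\cdot\nb}_{\partial\mathcal{T}_h}$. Using the quoted trace bound $\norm{(\wb - \Pi_h'\wb)\cdot\nb}_{\partial K} \le C h_K^{1/2}\norm{\wb}_{1,K}$, Cauchy--Schwarz, and $\gamma_0 \le |\tau|$ to convert $\norm{z-\zbar}_{\partial K}$ into the jump seminorm, this face term is bounded by $C h^{1/2}\LRa{|\tau|(z-\zbar), z-\zbar}_{\partial\mathcal{T}_h}^{1/2}\norm{z}_\Omega$; likewise $\abs{a_h(\pb, -\Pi_h'\wb)} \le C\norm{\pb}_{\Vb_h}\norm{z}_\Omega$ by a weighted Cauchy--Schwarz and the $L^2$-stability of $\Pi_h'$.

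Finally I take $(\rb, v, \vbar) := (\rb_1, v_1, \vbar_1) + \delta(\rb_2, 0, 0)$ for a small $\delta > 0$. The two cross terms from the second piece are absorbed by Young's inequality into $\tfrac12\norm{\pb}_{\Vb_h}^2$ and $\tfrac12\LRa{|\tau|(z-\zbar), z-\zbar}_{\partial\mathcal{T}_h}$, after which $B_h \ge \tfrac12\norm{\pb}_{\Vb_h}^2 + \tfrac12\LRa{|\tau|(z-\zbar),z-\zbar}_{\partial\mathcal{T}_h} + \tfrac{\delta}{2}\norm{z}_\Omega^2 \ge c\,\seminorm{(\pb,z,\zbar)}_{\Xb_h}^2$ for $\delta$ fixed small (independent of $h$), while $\seminorm{(\rb,v,\vbar)}_{\Xb_h} \le (1 + C\delta)\seminorm{(\pb,z,\zbar)}_{\Xb_h}$ follows from $\norm{\Pi_h'\wb}_{\Vb_h} \le C\norm{z}_\Omega$. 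I expect the main obstacle to be the second piece: using the moment-preserving, trace-controlling projection $\Pi_h'$ so that $b_h(z,\zbar;\cdot)$ reproduces $\norm{z}_\Omega^2$ up to an $O(h^{1/2})$ error, and then verifying that every constant entering the absorption — hence the admissible $\delta$ — is independent of $h$ and of the contrast $\kappa_\sigma$, which is exactly the robustness the theorem claims.
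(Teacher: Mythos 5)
Your overall strategy coincides with the paper's: a sign-flip test triple to extract $\norm{\pb}_{\Vb_h}^2$ and the $|\tau|$-weighted jump term, a divergence lifting (\cref{lemma:div-surjective}) composed with a moment-preserving projection to recover $\norm{z}_{\Omega}^2$ up to a face-error term, and a Young-inequality absorption with a small parameter. Your first piece, the absorption step, and the bound $\seminorm{(\rb,v,\vbar)}_{\Xb_h} \le C\seminorm{(\pb,z,\zbar)}_{\Xb_h}$ match the paper's proof essentially line by line (the paper takes $\rb = \chi_{\Omega_+}\pb - \chi_{\Omega_-}\pb + \varepsilon\rb_0$, etc., and, like you, the multiplier test function is set to zero on $\Gamma_I$).

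The genuine gap is in your second piece, and it sits at exactly the one point that makes this problem different from a standard HDG analysis: the interface facets. You bound the residual face term $\LRa{z-\zbar,(\wb-\Pi_h'\wb)\cdot\nb}_{\partial\mathcal{T}_h}$ by Cauchy--Schwarz and then ``use $\gamma_0 \le |\tau|$ to convert $\norm{z-\zbar}_{\partial K}$ into the jump seminorm.'' That conversion is not available on facets $F \subset \Gamma_I$: there $\tau = 0$ by \cref{eq:tau-condition} (the hypothesis $\gamma_0 \le |\tau|$ applies only where $\tau \ne 0$), and $(z-\zbar)|_F$ is not controlled by $\seminorm{(z,\zbar)}_{W_h\times M_h}$ at all --- the value of $\zbar$ on $\Gamma_I$ is completely free, which is precisely why $\seminorm{\cdot}_{\Xb_h}$ is only a seminorm. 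Hence, for an element with a facet on $\Gamma_I$, the contribution $\LRa{z-\zbar,(\wb-\Pi_h'\wb)\cdot\nb}_F$ can be arbitrarily large at fixed $\seminorm{(\pb,z,\zbar)}_{\Xb_h}$, and your estimate fails. The paper's proof is engineered so that these terms vanish identically rather than being estimated: it defines $\Pi_h$ elementwise as the $L^2$ projection away from the interface and as $\Pi_h'$ on interface elements with the excluded facet $F_K$ chosen so that $F_K \not\subset \Gamma_I$ (possible since no element has its whole boundary on $\Gamma_I$). Then, by \cref{eq:intp-normal}, $(\wb - \Pi_h\wb)\cdot\nb$ has zero moments against $\mathcal{P}_k(F)$ on every $F \in \mathcal{F}_h^i$, and since $(z-\zbar)|_F \in \mathcal{P}_k(F)$ the interface contributions drop out; the remaining sum in \cref{eq:traceineqangles} runs over $\partial\mathcal{T}_h\setminus\Gamma_I$ only, where $|\tau| \ge \gamma_0$ makes your conversion legitimate. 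Your proposal never specifies the excluded facet of $\Pi_h'$; if it lies on $\Gamma_I$, the uncontrollable interface term survives. The repair is exactly the paper's device: fix $F_K$ off the interface, observe that the interface terms vanish by moment preservation, and restrict the remaining face sum to non-interface facets. With that modification your argument goes through as in the paper.
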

\begin{proof}
  We first prove the following weak inf-sup condition: There exist
  $C_0, C_1 >0$ independent of $h$ such that
  \begin{equation}
    \label{eq:weak_inf_sup}
    \inf_{(z, \zbar) \in W_h \times M_h} \sup_{\rb \in \Vb_h }
    \frac{b_h(z, \zbar; \rb)}{\norm{ \rb }_{\Vb_h} }
    \ge C_0 \norm{ z }_{\Omega} - C_1 \LRa{ |\tau| (z - \zbar), z - \zbar }_{\partial \mathcal{T}_h}^{\frac 12} .
  \end{equation}
  We show this by proving an equivalent condition, i.e., there exist
  $C_0', C_1', C_2' >0$ independent of $h$ such that for any
  $0 \neq (z, \zbar) \in W_h \times M_h$ there exists
  $\rb \in \Vb_h$ such that
  $\norm{ \rb }_{\Vb_h} \le C_2' \norm{ z }_{\Omega}$ and
  \begin{align}
    \label{eq:weak-inf-sup}
    b_h(z, \zbar; \rb)
    \ge C_0' \norm{ z }_{\Omega}^2
    - C_1' \LRa{ |\tau| (z - \zbar), z - \zbar }_{\partial \mathcal{T}_h}^{\frac 12} \norm{ z }_{\Omega}.
  \end{align}

  By \cref{lemma:div-surjective} there exists
  $\wb \in H^1(\Omega; \Bbb{R}^d)$ with
  $\wb \cdot \nb|_{\Gamma_N} = 0$ such that $\nabla \cdot \wb = -z$
  and $\norm{ \wb }_{1,\Omega} \le C_{\Omega} \norm{ z }_{\Omega}$
  with $C_{\Omega}$ depending only on $\Omega$ and $\Gamma_D$. We
  define $\Pi_h \wb$ on $K$ as follows: If
  $\partial K \cap \Gamma_I = \emptyset$, then $\Pi_h \wb$ is the
  $L^2$ orthogonal projection of $\wb$ into $\Vb_h$. If
  $\partial K \cap \Gamma_I \not = \emptyset$, then
  $\Pi_h \wb = \Pi_h' \wb$ with the interpolation $\Pi_h'$ associated
  to a facet $F \subset \partial K$, $F \not \subset \Gamma_I$. It
  then holds that
  \begin{align}
    \notag \del[0]{ \wb , \rb}_K &= \del[0]{ \Pi_h \wb , \rb}_K && \forall \rb \in \mathcal{P}_{k-1}(K; \mathbb{R}^d) \text{ for all } K \in \mathcal{T}_h,
    \\
    \label{eq:intp-normal}
    \lra{ \wb \cdot \nb, w }_F &= \lra{ \Pi_h \wb \cdot \nb, w }_F && \forall w \in \mathcal{P}_{k}(F) \text{ for all } F \in \mathcal{F}_h^i.
  \end{align}
  Then
  \begin{equation}
    \label{eq:bhuhw}
    \begin{split}
      -\norm{ z }_{\Omega}^2 
      &= \del[0]{z, \nabla \cdot \wb}_{\Omega} \\
      &= - \del[0]{ \nabla z, \wb}_{\Omega} + \LRa{ z, \wb \cdot \nb }_{\partial \mathcal{T}_h} \\
      &= - \del[0]{ \nabla z, \Pi_h \wb}_{\Omega} + \LRa{ z - \zbar, \wb \cdot \nb }_{\partial \mathcal{T}_h}
      \\
      &= - b_h\del[0]{z, \zbar; \Pi_h \wb} + \LRa{ z - \zbar, (\wb - \Pi_h \wb ) \cdot \nb }_{\partial \mathcal{T}_h \setminus \Gamma_I}
      ,     
    \end{split}
  \end{equation}
  where the third equality holds because $\wb\cdot\nb$ and $\zbar$
  are single-valued on facets, $\zbar = 0$ on $\Gamma_D$ by the
  definition of $M_h$, and $\wb \cdot \nb = 0$ on $\Gamma_N$, and the fourth equality holds because of \cref{eq:intp-normal}.
    
  The element-wise trace inequality and the Cauchy-Schwarz inequality
  give
  \begin{equation}
    \label{eq:traceineqangles}
    \begin{split}
     \LRa{ z - \zbar, (\wb - \Pi_h \wb ) \cdot \nb }_{\partial \mathcal{T}_h  \setminus \Gamma_I}
     & \ge - C \LRa{ |\tau| (z - \zbar), z - \zbar }_{\partial \mathcal{T}_h}^{\frac 12} \norm{ \wb }_{1,\Omega}
     \\
     & \ge - C' \LRa{ |\tau| (z - \zbar), z - \zbar }_{\partial \mathcal{T}_h}^{\frac 12} \norm{ z }_{\Omega}.      
    \end{split}
  \end{equation}
  Combining \cref{eq:bhuhw} and \cref{eq:traceineqangles} we obtain
  \begin{equation}
    b_h\del[0]{z, \zbar; \Pi_h \wb} \ge \norm{ z }_{\Omega}^2
    - C' \LRa{ |\tau| (z - \zbar), z - \zbar }_{\partial \mathcal{T}_h}^{\frac 12} \norm{ z }_{\Omega}.
  \end{equation}
  Moreover, 
  \begin{equation}
    \norm{ \Pi_h \wb }_{\Omega}
    \le C \norm{ \wb }_{1,\Omega}
    \le C_{\Omega} \norm{ z }_{\Omega}
  \end{equation}
  holds, so the weak inf-sup condition \cref{eq:weak-inf-sup} follows.
  
  To prove \cref{eq:Bh-inf-sup} suppose that $\rb_0 \in \Vb_h$ is an
  element satisfying \cref{eq:weak-inf-sup} for given $z$. For given
  $(\pb, z, \bar{z}) \in \Xb_h$ we take
  $\rb = \chi_{\Omega_+} \pb - \chi_{\Omega_-} \pb + \varepsilon \rb_0
  $, $v = - \chi_{\Omega_+} z + \chi_{\Omega_-} z$,
  $\vbar = -\chi_{\overline{\Omega_+}} \zbar +
  \chi_{\overline{\Omega_-}} \zbar$ in \cref{eq:ah-form}, with
  $\varepsilon > 0$ to be determined later. Then by
  \cref{eq:weak-inf-sup} and Young's inequality,
  \begin{align*}
    B_h(\pb, z, \zbar; \rb, v, \vbar) 
    &= \norm{ \pb }_{\Vb_h}^2 + \varepsilon \del[0]{\sigma^{-1} \pb, \rb_0}_{\Omega}  + \varepsilon b_h(z, \zbar; \rb_0) \\
    &\quad + \LRa{ |\tau| (z - \zbar), z - \zbar}_{\partial \mathcal{T}_h}
    \\
    &\ge \norm{ \pb }_{\Vb_h}^2 - \frac 12 \norm{ \pb}_{\Vb_h}^2 - \frac 12 \varepsilon ^2 (C_2')^2 \norm{ z }_{\Omega}^2
    \\
    &\quad + \varepsilon C_0' \norm{ z }_{\Omega}^2 
    -  \varepsilon C_1' \LRa{ |\tau| (z - \zbar), z - \zbar }_{\partial \mathcal{T}_h}^{\frac 12} \norm{ z }_{\Omega} 
    \\
    &\quad + \LRa{ |\tau| (z - \zbar), z - \zbar}_{\partial \mathcal{T}_h} \\
    &\ge \norm{ \pb }_{\Vb_h}^2 - \frac 12 \norm{ \pb}_{\Vb_h}^2 - \frac 12  \varepsilon ^2 (C_2')^2 \norm{ z }_{\Omega}^2
    \\
    &\quad + \varepsilon C_0' \norm{ z }_{\Omega}^2 
    -  \frac 12 \LRa{ |\tau| (z - \zbar), z - \zbar }_{\partial \mathcal{T}_h} \\
    &\quad - \frac 12 \varepsilon^2 (C_1')^2  \norm{ z }_{\Omega}^2 
    + \LRa{ |\tau| (z - \zbar), z - \zbar}_{\partial \mathcal{T}_h}.     
  \end{align*}
  Choosing $\varepsilon$ sufficiently small, we obtain
  \begin{equation*}
    B_h(\pb, z, \zbar; \rb, v, \vbar) 
    \ge
    C \del[1]{ \norm{ \pb }_{\Vb_h}^2
      + \norm{ z }_{\Omega}^2
      + \LRa{ |\tau| (z - \zbar), z - \zbar }_{\partial \mathcal{T}_h} }.
  \end{equation*}
  Finally, it is not difficult to show, by the choice of
  $(\rb, v, \bar{v})$, that
  \begin{align*}
    \seminorm{ (\rb, v, \vbar) }_{\Xb_h}
    \leq C \seminorm{ ( \pb, z, \zbar) }_{\Xb_h}.
  \end{align*}
  This proves \cref{eq:Bh-inf-sup}.
\end{proof}

\section{Error analysis}
\label{sec:error_analysis}
In this section we present the \emph{a priori} error estimates of our HDG
method. For this we recall an interpolation result
(cf. \cite[Theorem~2.1]{Cockburn:2010}): For $K \in \mathcal{T}_h$,
$\tau |_{\partial K}$ is a piecewise constant function
which is nonnegative if $K \subset \Omega_+$ and nonpositive if $K \subset \Omega_-$ 
such that $\tau |_{\partial K} \not = 0$. Then there exists a bounded
interpolation operator $\boldsymbol{\Pi} := (\Pi_{\Vb}, \Pi_W)$ from
$ H^1(K; \Bbb{R}^d) \times H^1(K)$ to $\Vb_h(K) \times W_h(K)$ such
that
\begin{subequations}
  \begin{align}
    \label{eq:intp1}
    \del[0]{\Pi_{\Vb} \rb, \tilde{\rb}}_K &= \del[0]{ \rb, \tilde{\rb} }_K
    & &  \forall \tilde{\rb} \in \sbr[0]{\mathcal{P}_{k-1}(K) }^d , \\
    \label{eq:intp2}
    \del[0]{\Pi_W v, \tilde{v}}_K &= \del[0]{ v, \tilde{v}}_K
    & &  \forall \tilde{v} \in \mathcal{P}_{k-1}(K), \\
    \label{eq:intp3}
    \LRa{ \Pi_{\Vb} \rb \cdot \nb + \tau  \Pi_W v, \lambda}_{F} &= \LRa{ \rb \cdot \nb + \tau v, \lambda}_{F}
    & &  \forall \lambda \in \mathcal{P}_k (F),
  \end{align}
  \label{eq:intp}
\end{subequations}
for $F \subset \partial K$ and
$(\rb, v) \in H^1(K; \Bbb{R}^d) \times H^1(K)$, and
\begin{align}
  \label{eq:interpolationestimate-1}
\norm{\rb - \Pi_{\Vb} \rb}_{K} &\leq C h_K^{k_{\rb} +1} | \rb |_{k_{\rb}+1, K} + h_K^{k_v +1} \tau_K^* | v |_{k_v + 1, K}, \\
  \label{eq:interpolationestimate-2}
\norm{v - \Pi_W v}_{K} &\leq C h_K^{k_v + 1} | v |_{k_v +1, K} + \frac{h_K^{k_{\rb}+1}}{\tau_{K}^{\max}} | \nabla \cdot \rb |_{k_{\rb}, K},
\end{align}
hold with $0 \le k_v, k_{\rb} \le k$. Here,
\begin{align*}
  &\tau_K^{\max} = 
  \begin{cases}
    \max_{F \subset \partial K} \tau|_F  &\text{ if } \tau \ge 0, \\
    \max_{F \subset \partial K} (- \tau|_F)  &\text{ if } \tau \le 0 ,
  \end{cases} \\
  &\tau_K^* = 
  \begin{cases}
    \max_{F \subset \partial K \setminus F^*} \tau|_F  &\text{ if } \tau \ge 0, \\
    \max_{F \subset \partial K \setminus F^*} (- \tau|_F)  &\text{ if } \tau \le 0 ,
  \end{cases}
\end{align*}
where $F^*$ is the face where $\tau_K^{\max}$ is attained, and the
implicit constants are independent of $K$ and $\tau$. We remark that
the proof in \cite{Cockburn:2010} is only for nonnegative $\tau$. The
proof, however, can easily be adapted to nonpositive $\tau$, so we
omit its proof here.  We will also require $P_M$, the facet-wise $L^2$
projection from $L^2(\mathcal{F}_h \setminus \mathcal{F}_h^D)$ to
$M_h$.

For brevity of notation, we will denote the difference between an
unknown $\phi$ and its approximation $\phi_h$ by
$e_{\phi} := \phi - \phi_h$. It will be convenient to also split the
error into interpolation and approximation errors:
\begin{align}
  e_{\qb} &= e_{\qb}^I + e_{\qb}^h := (\qb - \Pi_{\Vb} \qb) + (\Pi_{\Vb} \qb - \qb_h) , \notag \\
  e_u &= e_u^I + e_u^h :=  (u - \Pi_W u) + (\Pi_W u - u_h), \label{eq:error-decomposition} \\
  e_{\ubar} &= e_{\ubar}^I + e_{\ubar}^h := (\ubar - P_M \ubar) + (P_M \ubar - \ubar_h) . \notag 
\end{align}
The error equations are obtained by subtracting \cref{eq:hdgmethod}
from \cref{eq:exact} and integration by parts,
\begin{subequations}
  \begin{align}
    \del[0]{ \sigma^{-1} e_{\qb}, \rb }_{\Omega}
    - \del[0]{ e_u,  \nabla \cdot \rb}_{\Omega}
    + \LRa{e_{\ubar}, \rb \cdot \nb }_{\partial \mathcal{T}_h}
    &=0 && \rb \in \Vb_h,
    \\
    \del[0]{ e_{\qb}, \nabla v}_{\Omega}
    - \LRa{ e_{\qb} \cdot \nb + \tau (e_{u} - e_{\ubar}), v }_{\partial \mathcal{T}_h}
    &=0 && v \in W_h,
    \\
    \LRa{ e_{\qb} \cdot \nb + \tau (e_{u} - e_{\ubar}), \vbar}_{\partial \mathcal{T}_h \setminus \mathcal{F}_h^D}
    &=0
         && \vbar \in M_{h} .
  \end{align}
  \label{eq:error_eqns}
\end{subequations}

\begin{theorem}[A priori error estimates]
  Let $(\qb, u) \in H^s(\Omega; \Bbb{R}^d) \times H^s(\Omega)$,
  $s \ge 1$ be the solution of \cref{eq:exact} and let $\ubar$ be the
  trace of $u$ on $\mathcal{F}_h \backslash \mathcal{F}_h^D$ and
  $\ubar = 0 $ on $\mathcal{F}_h^D$. We assume that $\tau$ satisfies
  the same conditions as in \cref{thm:infsupcondition}, and let
  $(\qb_h, u_h, \ubar_h) \in \Vb_h \times W_h \times M_h$ solve
  \cref{eq:hdgmethod}. Then,
  \begin{subequations}
    \begin{align}
      \label{eq:q-err-estm}
      \norm{\qb - \qb_h}_{\Vb_h}  &\leq C h^s (\norm{\qb}_{H^s(\Omega)} + \norm{u}_{H^s(\Omega)} ),
      && 1 \le s \le k+1,
      \\
      \label{eq:u-err-estm}
      \norm{u - u_h}_{\Omega} &\leq C h^s (\norm{\qb}_{H^s(\Omega)} + \norm{u}_{H^s(\Omega)} ),
      && 1 \le s \le k+1,
      \\
      \label{eq:ubar-err-estm}
      \LRa{ P_M u - \ubar_h, P_M u - \ubar_h}^{\frac 12}_{\partial\mathcal{T}_h \backslash \mathcal{F}_h^D}
                                  &\leq C h^{s-\frac 12} (\norm{\qb}_{H^s(\Omega)} + \norm{u}_{H^s(\Omega)} ),
      && 1 \le s \le k+1.
    \end{align}    
  \end{subequations}  
\end{theorem}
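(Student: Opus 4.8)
The plan is to avoid any duality argument and instead feed the inf-sup condition of \cref{thm:infsupcondition} with the residual generated by the tailored interpolant $(\Pi_{\Vb}, \Pi_W, P_M)$. First I would record Galerkin orthogonality: assembling the three error equations \cref{eq:error_eqns} into the form \cref{eq:ah-form} gives $B_h(e_{\qb}, e_u, e_{\ubar}; \rb, v, \vbar) = 0$ for all $(\rb, v, \vbar) \in \Xb_h$. Splitting the error as in \cref{eq:error-decomposition} into an interpolation part $(e_{\qb}^I, e_u^I, e_{\ubar}^I)$ and a discrete part $(e_{\qb}^h, e_u^h, e_{\ubar}^h) \in \Xb_h$, orthogonality becomes $B_h(e_{\qb}^h, e_u^h, e_{\ubar}^h; \rb, v, \vbar) = -B_h(e_{\qb}^I, e_u^I, e_{\ubar}^I; \rb, v, \vbar)$, so the whole estimate reduces to controlling the right-hand side.

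The structural heart of the argument is that this right-hand side collapses to a single term. Using $\nabla \cdot \rb \in \mathcal{P}_{k-1}$ together with \cref{eq:intp2}, and $\rb \cdot \nb|_F \in \mathcal{P}_k(F)$ together with the $L^2$-optimality of $P_M$, I would show $b_h(e_u^I, e_{\ubar}^I; \rb) = 0$. Likewise $\nabla v \in \mathcal{P}_{k-1}$ and \cref{eq:intp1} kill the volume part of $b_h(v, \vbar; e_{\qb}^I)$, while the defining property \cref{eq:intp3} rewrites the remaining facet term as $\LRa{\tau e_u^I, v - \vbar}_{\partial\mathcal{T}_h}$, which cancels against $-c_h(e_u^I, e_{\ubar}^I; v, \vbar)$ up to $\LRa{\tau e_{\ubar}^I, v - \vbar}_{\partial\mathcal{T}_h}$; the latter vanishes because $\tau(v - \vbar) \in \mathcal{P}_k(F)$ and $e_{\ubar}^I$ is $L^2$-orthogonal to $M_h$ on every non-Dirichlet facet (with $e_{\ubar}^I = 0$ on $\mathcal{F}_h^D$ and $\tau = 0$ on $\Gamma_I$). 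Hence $B_h(e_{\qb}^I, e_u^I, e_{\ubar}^I; \rb, v, \vbar) = a_h(e_{\qb}^I, \rb)$, and a Cauchy--Schwarz estimate respecting the sign split of $\norm{\cdot}_{\Vb_h}$ gives $\seminorm{a_h(e_{\qb}^I, \rb)} \le \norm{e_{\qb}^I}_{\Vb_h}\, \seminorm{(\rb, v, \vbar)}_{\Xb_h}$.

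Feeding this into \cref{eq:Bh-inf-sup} yields $\seminorm{(e_{\qb}^h, e_u^h, e_{\ubar}^h)}_{\Xb_h} \le \beta^{-1}\norm{e_{\qb}^I}_{\Vb_h}$. The interpolation estimates \cref{eq:interpolationestimate-1} and \cref{eq:interpolationestimate-2} then bound $\norm{e_{\qb}^I}_{\Vb_h}$ and $\norm{e_u^I}_{\Omega}$ by $Ch^s(\norm{\qb}_{H^s(\Omega)} + \norm{u}_{H^s(\Omega)})$ on choosing $k_{\rb} = k_v = s-1$, using that $\tau_K^*$ and $1/\tau_K^{\max}$ are bounded by $\gamma_1$ and $1/\gamma_0$ under the hypotheses of \cref{thm:infsupcondition} and that $\seminorm{\nabla\cdot\qb}_{k_{\rb},K} \le \seminorm{\qb}_{k_{\rb}+1,K}$. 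Since $\norm{e_{\qb}^h}_{\Vb_h}$ and $\norm{e_u^h}_{\Omega}$ are dominated by $\seminorm{(e_{\qb}^h, e_u^h, e_{\ubar}^h)}_{\Xb_h}$, the triangle inequality in \cref{eq:error-decomposition} yields \cref{eq:q-err-estm} and \cref{eq:u-err-estm} directly.

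For the $\ubar$ estimate, note $P_M u - \ubar_h = e_{\ubar}^h$. The plan is to exploit the first equation of \cref{eq:error_eqns}: inserting \cref{eq:error-decomposition}, the $e_u^I$ and $e_{\ubar}^I$ contributions vanish exactly as above, leaving $\LRa{e_{\ubar}^h, \rb\cdot\nb}_{\partial\mathcal{T}_h} = \del[0]{e_u^h, \nabla \cdot \rb}_{\Omega} - \del[0]{\sigma^{-1} e_{\qb}, \rb}_{\Omega}$ for all $\rb \in \Vb_h$. I would then take $\rb$ to be a discrete lifting in $\Vb_h$ with $\rb\cdot\nb = e_{\ubar}^h$ on $\partial\mathcal{T}_h\setminus\mathcal{F}_h^D$, $\rb\cdot\nb = 0$ on $\mathcal{F}_h^D$, and $\norm{\rb}_{\Omega} \le Ch^{1/2}\LRa{e_{\ubar}^h, e_{\ubar}^h}^{1/2}_{\partial\mathcal{T}_h\setminus\mathcal{F}_h^D}$, so the facet term becomes $\LRa{e_{\ubar}^h, e_{\ubar}^h}_{\partial\mathcal{T}_h\setminus\mathcal{F}_h^D}$. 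Bounding the two volume terms by $C\norm{e_{\qb}}_{\Omega}\norm{\rb}_{\Omega}$ and $\norm{e_u^h}_{\Omega}\norm{\nabla\cdot\rb}_{\Omega}$ with the inverse inequality $\norm{\nabla\cdot\rb}_{\Omega} \le Ch^{-1}\norm{\rb}_{\Omega}$, dividing by $\LRa{e_{\ubar}^h, e_{\ubar}^h}^{1/2}$, and inserting the already-proven $O(h^s)$ bounds on $\norm{e_{\qb}}_{\Omega}$ and $\norm{e_u^h}_{\Omega}$ produces the half-order loss of \cref{eq:ubar-err-estm}. The main obstacle I anticipate is precisely this last step: because $\tau = 0$ on $\Gamma_I$ the weighted seminorm gives no control of $e_{\ubar}^h$ on the interface, so the unweighted trace norm must be recovered through this lifting-plus-inverse-inequality device, and constructing the $\Vb_h$-lifting with the sharp $h^{1/2}$ scaling (and confirming the interpolation pieces genuinely drop out, including on $\Gamma_I$) is where the care is needed.
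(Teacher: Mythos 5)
Your proposal is correct and follows essentially the same route as the paper's proof: both reduce the error equations via the orthogonality properties of $(\Pi_{\Vb},\Pi_W,P_M)$ so that only the $(\sigma^{-1}e_{\qb}^I,\cdot)_{\Omega}$ term survives, both invoke the inf-sup condition of \cref{thm:infsupcondition} to control the discrete error in the $\Xb_h$ seminorm by $\norm[0]{e_{\qb}^I}_{\Vb_h}$, and both obtain \cref{eq:ubar-err-estm} from the first reduced error equation using an element-wise normal-trace lifting with the $h^{1/2}$ scaling and an inverse inequality. The only cosmetic deviation is that you extract \cref{eq:q-err-estm} directly from the inf-sup bound, whereas the paper first proves it by a separate energy identity (sign-flipped test functions in the reduced error equations) before using inf-sup for \cref{eq:u-err-estm}; both are valid.
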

\begin{proof}
  By definition of the interpolation operator \cref{eq:intp}, the
  error equations \cref{eq:error_eqns} can be reduced to
  \begin{subequations}
    \begin{align}
      \label{eq:red-err-eq1}
      \del[0]{ \sigma^{-1} e_{\qb}^h, \rb }_{\Omega}
      - \del[0]{ e_u^h,  \nabla \cdot \rb}_{\Omega}
      + \lra{e_{\ubar}^h, \rb \cdot \nb }_{\partial \mathcal{T}_h}
      &= -\del[0]{ \sigma^{-1} e_{\qb}^I, \rb }_{\Omega}  && \rb \in \Vb_h,
      \\
      \label{eq:red-err-eq2}
      \del[0]{ e_{\qb}^h, \nabla v}_{\Omega}
      - \lra{ e_{\qb}^h \cdot \nb + \tau (e_{u}^h - e_{\ubar}^h), v }_{\partial \mathcal{T}_h}
      &=0 && v \in W_h,
      \\
      \label{eq:red-err-eq3}
      \lra{ e_{\qb}^h \cdot \nb + \tau (e_{u}^h - e_{\ubar}^h), \vbar}_{\partial \mathcal{T}_h \setminus \mathcal{F}_h^D}
      &=0  && \vbar \in M_{h} .
    \end{align}
    \label{eq:red-err-eq}
  \end{subequations}
  To show  \cref{eq:q-err-estm}, take 
  $\boldsymbol{r} = \chi_{\Omega_+} e_{\qb}^h  - \chi_{\Omega_-} e_{\qb}^h$, 
  $v = - \chi_{\Omega_+} e_u^h + \chi_{\Omega_-} e_u^h$,
  $\vbar = \chi_{\overline{\Omega_+}} e_{\ubar}^h -
  \chi_{\overline{\Omega_-}} e_{\ubar}^h$ in \cref{eq:red-err-eq}, 
  and add all the equations. We find
  \begin{align*}
    \del[0]{ |\sigma|^{-1} e_{\qb}^h, e_{\qb}^h }_{\Omega} 
    + \lra{ |\tau| \del[0] {e_u^h - e_{\ubar}^h}, e_u^h - e_{\ubar}^h }_{\partial \mathcal{T}_h}
    &= - \del[0]{ \sigma^{-1} e_{\qb}^I, \rb }_{\Omega}
    \\
    &\le \del[0]{ |\sigma|^{-1} e_{\qb}^I, e_{\qb}^I }_{\Omega}^{\frac 12} \del[0]{ |\sigma|^{-1} e_{\qb}^h, e_{\qb}^h }_{\Omega}^{\frac 12} . 
  \end{align*}
  By this inequality, the triangle inequality, and \cref{eq:interpolationestimate-1},
  one can obtain \cref{eq:q-err-estm}. 
  
  To show \cref{eq:u-err-estm}, we note a result from
  \cref{thm:infsupcondition}: There exists $(\rb, v, \vbar) \in \Xb_h$
  such that
  $B_h(e_{\qb}^h, e_u^h, e_{\ubar}^h; \rb, v, \vbar) \ge
  \envert[0]{(e_{\qb}^h, e_u^h, e_{\ubar}^h)}_{\Xb_h}$ with
  $\seminorm{\rb, v, \vbar}_{\Xb_h} \leq C$. Together with
  \cref{eq:red-err-eq} and \cref{eq:interpolationestimate-1} this
  results in
  \begin{equation*}
    \envert[0]{(e_{\qb}^h, e_u^h, e_{\ubar}^h)}_{\Xb_h}
    \leq C
    \norm[0]{e_{\qb}^I}_{\Vb_h}
    \leq C h^s (\norm{\qb}_{H^s(\Omega)} + \norm{u}_{H^s(\Omega)}), \qquad 1 \le s \le k+1 .
  \end{equation*}
  Then \cref{eq:u-err-estm} follows by the triangle inequality.

  We next show \cref{eq:ubar-err-estm}. For each $K \in \mathcal{T}_h$
  there exists $\rb \in \Vb_h (K)$ such that
  $\rb \cdot \nb = e_{\ubar}^h$ on $\partial K$ and
  $\norm[0]{ \rb }_K \le C h_K^{\frac 12}
  \norm[0]{e_{\ubar}^h}_{\partial K}$ with $C$ independent of
  $K$. Taking this $\rb$ in \cref{eq:red-err-eq1}, we obtain
  \begin{align*}
    \lra{ e_{\ubar}^h, e_{\ubar}^h }_{\partial \mathcal{T}_h} 
    &=  \envert[1]{ - \del[0]{ \sigma^{-1} e_{\qb}, \rb }_{\Omega}  
      - \del[0]{ e_u^h,  \nabla \cdot \rb}_{\Omega} }
    \\
    &\le  C \norm[0] { e_{\qb} }_{\Omega} \norm[0]{ \rb }_{\Omega}  
      + \norm[0]{ e_u^h }_{\Omega} \norm{  \nabla \cdot \rb}_{\Omega}
    \\
    &\le C \norm[0]{e_{\qb} }_{\Omega} h^{\frac 12}
         \lra{ e_{\ubar}^h, e_{\ubar}^h }_{\partial \mathcal{T}_h}^{\frac 12} 
         + C h^{-\frac 12} \norm[0]{e_u^h}_{\Omega} 
         \lra{ e_{\ubar}^h, e_{\ubar}^h }_{\partial \mathcal{T}_h}^{\frac 12}
  \end{align*}
  where we used an inverse inequality and the properties of the chosen
  $\rb$ in the last inequality.  Then \cref{eq:ubar-err-estm} follows
  by \cref{eq:q-err-estm} and \cref{eq:u-err-estm}.
\end{proof}

\begin{remark}
  We remark that the proof of \cref{eq:q-err-estm} is not
  new. However, to the best of our knowledge, the proofs of the
  estimates \cref{eq:u-err-estm} and \cref{eq:ubar-err-estm} for
  sign-changing stabilization parameters without a duality argument
  have not been reported in the literature.
\end{remark}

\section{Superconvergence by a duality argument}
\label{sec:superconvergence}
In this section we discuss superconvergence of $u_h$ by a duality
argument. For $\theta \in L^2(\Omega)$, let us consider a dual
problem,
\begin{subequations}
  \begin{align}
    \label{eq:dualproblem_a}
    \sigma^{-1} \Phib + \nabla \phi &= 0 && \text{ in } \Omega,
    \\
    \nabla \cdot \Phib &= \theta  && \text{ in } \Omega,
    \\
    \phi &= 0 && \text{ on } \partial \Omega.
  \end{align}
  \label{eq:dualproblem}
\end{subequations}
We assume that the solution $(\Phib, \phi)$ satisfies the regularity
assumption
\begin{equation}
  \label{eq:regularity}
  \norm{\Phib}_{1, \Omega_+} + \norm{\Phib}_{1, \Omega_-}
  + \norm{\phi}_{2, \Omega_+} + \norm{\phi}_{2, \Omega_-}
  \le C_{\rm reg} \norm{ \theta }_{\Omega}.
\end{equation}
This assumption holds, for example, when both $\Omega_+$ and
$\Omega_-$ are convex (cf. \cite[Proposition~2]{Chesnel:2013}).

The following lemma is an analogue of Lemma~4.2 in
\cite{Cockburn:2010}.  The proof is the same as in
\cite{Cockburn:2010}, but we include a detailed proof here to show
that the proof still holds with sign-changing coefficients.
\begin{theorem}[duality argument]
  \label{thm:duality-argument}
  Suppose that
  $(\qb, u) \in H^s(\Omega; \Bbb{R}^d) \times H^s(\Omega)$, $s \ge 1$,
  and $(\qb_h, u_h, \ubar_h) \in \Vb_h \times W_h \times M_h$ are the
  solutions of \cref{eq:exact} and \cref{eq:hdgmethod},
  respectively. Let $e_u^h$ be as defined in
  \cref{eq:error-decomposition}. If \cref{eq:regularity} holds for the
  solution $(\Phib, \phi)$ of \cref{eq:dualproblem}, then,
  \begin{equation}
    \label{eq:duality-identity}
    \del[0]{e_u^h, \theta }_{\Omega} = \del[0] {\sigma^{-1} (\qb - \qb_h) , \Pi_{\Vb} \Phib - \Phib}_{\Omega} 
    + \del[0] {\Pi_{\Vb} \qb - \qb , \nabla (\phi - \phi_h) }_{\Omega} 
  \end{equation}    
  for any $\phi_h \in W_h$. Additionally, the following
  superconvergence result holds:
  \begin{equation}
    \label{eq:superconvergence}
    \norm{ \Pi_W u - u_h }_{\Omega}
    \le C C_{\rm reg} h \del[1]{ \norm{ \qb - \qb_h }_{\Vb_h} + \norm{ \qb - \Pi_{\Vb} \qb}_{\Omega} } 
  \end{equation}
  if $k \ge 1$.
\end{theorem}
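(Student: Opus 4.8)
The plan is to first establish the identity \cref{eq:duality-identity} for arbitrary $\phi_h \in W_h$ and then obtain \cref{eq:superconvergence} by specializing to $\theta = e_u^h$. For the identity I would begin from $\del[0]{e_u^h, \theta}_{\Omega} = \del[0]{e_u^h, \nabla \cdot \Phib}_{\Omega}$ and split the dual flux as $\Phib = \Pi_{\Vb}\Phib + (\Phib - \Pi_{\Vb}\Phib)$. Into the interpolated part I would insert $\rb = \Pi_{\Vb}\Phib$ in the reduced error equation \cref{eq:red-err-eq1}, which rewrites $\del[0]{e_u^h, \nabla\cdot\Pi_{\Vb}\Phib}_{\Omega}$ as $\del[0]{\sigma^{-1}e_{\qb}^h, \Pi_{\Vb}\Phib}_{\Omega} + \del[0]{\sigma^{-1}e_{\qb}^I, \Pi_{\Vb}\Phib}_{\Omega} + \LRa{e_{\ubar}^h, \Pi_{\Vb}\Phib\cdot\nb}_{\partial\mathcal{T}_h}$; the two volume terms recombine into $\del[0]{\sigma^{-1}e_{\qb}, \Pi_{\Vb}\Phib}_{\Omega}$ since $e_{\qb} = e_{\qb}^h + e_{\qb}^I$. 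For the interpolation-error part I would integrate by parts element-wise; the volume term $\del[0]{\nabla e_u^h, \Phib - \Pi_{\Vb}\Phib}_{\Omega}$ vanishes by the orthogonality \cref{eq:intp1} because $\nabla e_u^h$ is element-wise in $\mathcal{P}_{k-1}$, leaving only $\LRa{e_u^h, (\Phib - \Pi_{\Vb}\Phib)\cdot\nb}_{\partial\mathcal{T}_h}$.

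I would then use the dual relation $\sigma^{-1}\Phib = -\nabla\phi$ to split $\del[0]{\sigma^{-1}e_{\qb}, \Pi_{\Vb}\Phib}_{\Omega} = \del[0]{\sigma^{-1}e_{\qb}, \Pi_{\Vb}\Phib - \Phib}_{\Omega} - \del[0]{e_{\qb}, \nabla\phi}_{\Omega}$, and further write $-\del[0]{e_{\qb}, \nabla\phi}_{\Omega} = \del[0]{\Pi_{\Vb}\qb - \qb, \nabla\phi}_{\Omega} - \del[0]{e_{\qb}^h, \nabla\phi}_{\Omega}$ using $\Pi_{\Vb}\qb - \qb = -e_{\qb}^I$. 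The first two summands are exactly the right-hand side of \cref{eq:duality-identity} (with $\nabla\phi$ in place of $\nabla(\phi - \phi_h)$), so the identity reduces to showing that the leftover terms $-\del[0]{e_{\qb}^h, \nabla\phi}_{\Omega} + \LRa{e_{\ubar}^h, \Pi_{\Vb}\Phib\cdot\nb}_{\partial\mathcal{T}_h} + \LRa{e_u^h, (\Phib - \Pi_{\Vb}\Phib)\cdot\nb}_{\partial\mathcal{T}_h}$ sum to zero. Replacing $\nabla\phi$ by $\nabla(\phi - \phi_h)$ is then free for every $\phi_h \in W_h$, since $\del[0]{\Pi_{\Vb}\qb - \qb, \nabla\phi_h}_{\Omega} = 0$ by \cref{eq:intp1}.

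The vanishing of these leftover terms is the crux of the argument, and is where I expect the bookkeeping to be most delicate, since it is precisely the adjoint consistency of the scheme. I would rewrite $\del[0]{e_{\qb}^h, \nabla\phi}_{\Omega}$ by testing \cref{eq:red-err-eq2} with $v = \Pi_W\phi$ and integrating $\del[0]{e_{\qb}^h, \nabla(\phi - \Pi_W\phi)}_{\Omega}$ by parts, whose volume part drops out by \cref{eq:intp2}. The normal traces of $\Pi_{\Vb}\Phib$ are then handled by the key projection property \cref{eq:intp3}, applied on each facet with $\lambda = e_u^h$ and $\lambda = e_{\ubar}^h$, which replaces $\Pi_{\Vb}\Phib \cdot \nb$ by $\Phib\cdot\nb + \tau(\phi - \Pi_W\phi)$. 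All the $\tau$-weighted pieces cancel algebraically, and the remainder consolidates into $-\LRa{e_{\qb}^h\cdot\nb + \tau(e_u^h - e_{\ubar}^h), \phi}_{\partial\mathcal{T}_h} + \LRa{\Phib\cdot\nb, e_{\ubar}^h}_{\partial\mathcal{T}_h}$. The first pairing vanishes upon taking $\vbar = P_M\phi \in M_h$ in \cref{eq:red-err-eq3}: this is admissible because $\phi = 0$ on $\partial\Omega$ makes $P_M\phi = 0$ on $\mathcal{F}_h^D$, and since $e_{\qb}^h\cdot\nb + \tau(e_u^h - e_{\ubar}^h)$ is facet-wise in $\mathcal{P}_k$ one may replace $\phi$ by $P_M\phi$. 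The second pairing vanishes because $\Phib \in H(\mathrm{div};\Omega)$ has single-valued normal trace across interior facets while $e_{\ubar}^h$ is single-valued, and on $\partial\Omega$ the contributions drop since $e_{\ubar}^h = 0$ on $\mathcal{F}_h^D$ and the dual carries a homogeneous flux condition on $\Gamma_N$.

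Finally, for \cref{eq:superconvergence} I would set $\theta = e_u^h$ in \cref{eq:duality-identity} and apply Cauchy--Schwarz to the two right-hand terms, bounding them by $\norm{\qb - \qb_h}_{\Vb_h}\norm{\Pi_{\Vb}\Phib - \Phib}_{\Omega}$ (using that $\sigma^{-1}$ is bounded) and $\norm{\qb - \Pi_{\Vb}\qb}_{\Omega}\norm{\nabla(\phi - \phi_h)}_{\Omega}$, respectively. The interpolation estimate \cref{eq:interpolationestimate-1} applied to the dual flux with $k_{\rb} = 0$, $k_v = 1$ gives $\norm{\Pi_{\Vb}\Phib - \Phib}_{\Omega} \le C h (\norm{\Phib}_{1,\Omega_+} + \norm{\Phib}_{1,\Omega_-} + \norm{\phi}_{2,\Omega_+} + \norm{\phi}_{2,\Omega_-})$, while choosing $\phi_h$ as a best $W_h$-approximation of $\phi$ on each subdomain gives $\norm{\nabla(\phi - \phi_h)}_{\Omega} \le C h(\norm{\phi}_{2,\Omega_+} + \norm{\phi}_{2,\Omega_-})$; it is precisely this second bound that requires $k \ge 1$. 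The regularity assumption \cref{eq:regularity} converts both right-hand factors into $C C_{\rm reg} h \norm{e_u^h}_{\Omega}$, and dividing through by $\norm{e_u^h}_{\Omega} = \norm{\Pi_W u - u_h}_{\Omega}$ yields \cref{eq:superconvergence}.
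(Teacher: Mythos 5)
Your proposal is correct and takes essentially the same approach as the paper: it uses the identical ingredients --- the reduced error equations \cref{eq:red-err-eq1}--\cref{eq:red-err-eq3}, the projection properties \cref{eq:intp1}--\cref{eq:intp3} together with $P_M$, the single-valuedness of $e_{\ubar}^h$ and normal continuity of $\Phib$, the dual relation $\sigma^{-1}\Phib = -\nabla\phi$, and then Cauchy--Schwarz with the interpolation bounds and \cref{eq:regularity} for \cref{eq:superconvergence} --- differing only in bookkeeping (you isolate the right-hand side of \cref{eq:duality-identity} first and verify the residual terms cancel, while the paper derives the identity through one sequential chain of equalities). One remark: your appeal to ``a homogeneous flux condition on $\Gamma_N$'' is not literally part of the dual problem \cref{eq:dualproblem}, which imposes $\phi = 0$ on all of $\partial\Omega$; however, the paper's own unproven claim \cref{eq:normal-continuity} silently requires the same thing (or $\Gamma_N = \emptyset$), so your treatment of that boundary term is, if anything, more explicit than the paper's.
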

\begin{proof}
  In this proof we abbreviate `integration by parts' by IBP. We will
  furthermore use $\del[0]{\cdot, \cdot}$ and $\lra{\cdot, \cdot}$
  instead of $\del[0]{\cdot, \cdot}_{\Omega}$ and
  $\lra{\cdot, \cdot}_{\partial \mathcal{T}_h}$ for brevity of notation.
  
  We first note that
  \begin{equation}
    \label{eq:normal-continuity}
    \lra{e_{\ubar}^h, \Phib \cdot \nb} = 0.
  \end{equation}
  From \cref{eq:dualproblem} one can obtain
  \begin{align}
    \label{eq:euhthetasuperconv}
    \del[0]{e_u^h, \theta}
    &= \del[0]{e_u^h, \nabla \cdot \Phib} & &
    \\
    \nonumber
    &= - \del[0]{\nabla e_u^h, \Phib} + \lra{e_u^h, \Phib \cdot \nb } & & \text{by IBP}
    \\
    \nonumber
    &= - \del[0]{\nabla e_u^h, \Pi_{\Vb} \Phib} + \lra{e_u^h, \Phib \cdot \nb } & & \text{by \cref{eq:intp1}}
    \\
    \nonumber
    &= \del[0]{e_u^h, \nabla \cdot \Pi_{\Vb} \Phib} + \lra{e_u^h, \del[0]{\Phib - \Pi_{\Vb} \Phib}\cdot \nb} & & \text{by IBP}
    \\
    \nonumber
    &= \del[0]{e_u^h, \nabla \cdot \Pi_{\Vb} \Phib} - \lra{e_u^h, \tau \del[0]{\phi - \Pi_{W} \phi}\cdot \nb} & & \text{by \cref{eq:intp3}}
    \\
    \nonumber
    &= \del[0]{ \sigma^{-1} e_{\qb} , \Pi_{\Vb} \Phib } + \lra{e_{\ubar}^h, \Pi_{\Vb} \Phib \cdot \nb } - \lra{e_u^h, \tau \del[0]{\phi - \Pi_{W} \phi}\cdot \nb} & & \text{by \cref{eq:red-err-eq1}}
    \\
    \nonumber
    &= \del[0]{ \sigma^{-1} e_{\qb} , \Pi_{\Vb} \Phib } + \lra{e_{\ubar}^h, \del[0]{ \Pi_{\Vb} \Phib - \Phib} \cdot \nb } - \lra{e_u^h, \tau \del[0]{\phi - \Pi_{W} \phi}\cdot \nb} & & \text{by \cref{eq:normal-continuity}}
    \\
    \nonumber
    &= \del[0]{ \sigma^{-1} e_{\qb} , \Pi_{\Vb} \Phib } - \lra{e_u^h - e_{\ubar}^h, \tau \del[0]{\phi - \Pi_{W} \phi} }  & & \text{by \cref{eq:intp3}}
    \\
    \nonumber
    &= \del[0]{ \sigma^{-1} e_{\qb} , \Pi_{\Vb} \Phib } - \lra{e_u^h - e_{\ubar}^h, \tau P_M \phi  }
      + \lra{e_u^h - e_{\ubar}^h, \tau \Pi_{W} \phi }
  \end{align}
  where we used the property of the $L^2$ projection $P_M$ in the last
  identity.

  We next note that by \cref{eq:red-err-eq3} and the property of $P_M$,
  \begin{equation*}
   \lra{e_u^h - e_{\ubar}^h, \tau P_M \phi } = - \lra{ e_{\qb}^h \cdot
    \nb , P_M \phi} = - \lra{ e_{\qb}^h \cdot \nb , \phi}. 
  \end{equation*}
  We also have
  \begin{align*}
    \lra{ \tau \del[0]{e_u^h - e_{\ubar}^h}, \Pi_W \phi } 
    &= \del[0]{e_{\qb}^h, \nabla \Pi_W \phi} - \lra{ e_{\qb}^h \cdot \nb, \Pi_W \phi} & &  \text{by \cref{eq:red-err-eq2} }
    \\ 
    &= -\del[0]{\nabla \cdot e_{\qb}^h, \Pi_W \phi} & & \text{by IBP }
    \\
    &= -\del[0]{\nabla \cdot  e_{\qb}^h, \phi }  & & \text{by \cref{eq:intp1}}
    \\
    &= \del[0]{e_{\qb}^h, \nabla \phi } - \lra{ e_{\qb}^h \cdot \nb, \phi}.  & & \text{by IBP}
  \end{align*}
  These two identities combined with \cref{eq:euhthetasuperconv}
  result in
  \begin{equation}
    \label{eq:euhthetasimp}
    \del[0]{e_u^h, \theta}
    = \del[0]{ \sigma^{-1} e_{\qb} , \Pi_{\Vb} \Phib }
    + \del[0]{ e_{\qb}^h , \nabla \phi} .
  \end{equation}
  Note that
  $\del[0]{ \sigma^{-1} e_{\qb}, \Phib } + \del[0]{ e_{\qb}, \nabla
    \phi} = 0$ by \cref{eq:dualproblem_a}. Using this identity and
  \cref{eq:intp1}, we may write \cref{eq:euhthetasimp} as
  \begin{equation*}
    \begin{split}
      \del[0]{e_u^h, \theta}
      &= \del[0]{\sigma^{-1} e_{\qb}, \Pi_{\Vb} \Phib - \Phib} + \del[0]{e_{\qb}^h - e_{\qb}, \nabla \phi }
      \\
      &= \del[0]{\sigma^{-1} \del[0]{\qb - \qb_h}, \Pi_{\Vb} \Phib - \Phib}
      + \del[0]{\Pi_{\Vb} \qb - \qb, \nabla \del[0]{\phi - \phi_h} },      
    \end{split}
  \end{equation*}
  for any $\phi_h \in W_h$. This completes the proof of
  \cref{eq:duality-identity}.
  
  To see \cref{eq:superconvergence}, choose
  $\theta = e_u^h := \Pi_W u - u_h$.  Furthermore, since $k \ge 1$ and
  $\phi_h$ is arbitrary,
  $\norm{ \nabla \del[0]{\phi - \phi_h } }_{\Omega} \le C h
  \norm{\phi}_{2,\Omega}$. Now, applying the Cauchy--Schwarz
  inequality to \cref{eq:duality-identity} gives
  \begin{equation*}
    \begin{split}
      \norm{ \Pi_W u - u_h }_{\Omega}^2
      &\le C h \del[1]{ \norm{ \qb - \qb_h }_{\Vb_h} \norm{\Phib}_{1, \Omega} + \norm{ \qb - \Pi_{\Vb} \qb}_{\Omega} \norm{\phi}_{2,\Omega} }
      \\
      &\le C C_{\rm reg} h \norm{ \Pi_Wu - u_h }_{\Omega} \del[1]{ \norm{ \qb - \qb_h }_{\Vb_h} + \norm{ \qb - \Pi_{\Vb} \qb}_{\Omega} },
    \end{split}
  \end{equation*}
  where we used \cref{eq:regularity} in the last inequality. The
  conclusion follows by dividing both sides by
  $\norm{ \Pi_W u - u_h }_{\Omega}$.
\end{proof}

Following \citep{Stenberg:1991}, we define a post-processed solution
of $u_h$, $u_h^*$, by
\begin{subequations}
  \begin{align}
    \label{eq:post-processing-1}
    \del[0]{ \nabla u_h^*, \nabla v}_{K}
    &= - ( \sigma^{-1} \qb_h , \nabla v )_{K},  & & v \in \mathcal{P}_{k+1} (K),
    \\
    \label{eq:post-processing-2}
    \del[0]{u_h^*, 1}_K &= \del[0]{ u_h, 1}_K,
  \end{align}
  \label{eq:post-processing}
\end{subequations}
for all $K \in \mathcal{T}_h$. The next result shows that the error in
the post-processed solution $u_h^*$ superconverges. Although the proof
is standard, we include it here for completeness.
\begin{theorem}[superconvergence]
  \label{thm:superconvergence}
  Suppose that
  $(\qb, u) \in H^k(\Omega; \Bbb{R}^d) \times H^k(\Omega)$, $k \ge 1$,
  and $(\qb_h, u_h, \ubar_h) \in \Vb_h \times W_h \times M_h$ are the
  solutions of \cref{eq:exact} and \cref{eq:hdgmethod},
  respectively. Let $u_h^*$ be the post-processed solution defined by
  \cref{eq:post-processing}. The following result holds:
  \begin{equation*}
    \norm{u - u_h^*}_{\Omega}
    \le C h \del[1]{\norm{ \qb - \qb_h}_{\Vb_h} + \norm{ \qb - \Pi_{\Vb}\qb}_{\Omega}}  + C h^{k+2} \norm{u }_{k+2, \Omega} .  
  \end{equation*}
\end{theorem}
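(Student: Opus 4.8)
The plan is to bound $\norm{u - u_h^*}_{\Omega}$ by splitting it through a standard auxiliary ``elliptic-type'' projection associated with the local Neumann problem \cref{eq:post-processing}. Since the post-processing is done element-by-element, I would work on each $K \in \mathcal{T}_h$ and then sum. The key observation is that the bilinear form $(\nabla \cdot, \nabla \cdot)_K$ on $\mathcal{P}_{k+1}(K)/\mathbb{R}$ is coercive (a local Poincar\'e inequality), so the defining equations \cref{eq:post-processing} uniquely determine $u_h^*$ and, more importantly, the same equations are satisfied (approximately) by the exact solution $u$. Indeed, testing the exact relation $\sigma^{-1}\qb + \nabla u = 0$ against $\nabla v$ on $K$ gives $(\nabla u, \nabla v)_K = -(\sigma^{-1}\qb, \nabla v)_K$, the exact analogue of \cref{eq:post-processing-1}.

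\textbf{Main decomposition.} First I would introduce the local projection $\tilde{u} \in \mathcal{P}_{k+1}(K)$ defined by $(\nabla \tilde u, \nabla v)_K = (\nabla u, \nabla v)_K$ for all $v \in \mathcal{P}_{k+1}(K)$ together with the mean-value condition $(\tilde u, 1)_K = (u, 1)_K$; this is the standard $H^1$-projection fixed by its average, and by Bramble--Hilbert it satisfies $\norm{u - \tilde u}_K \le C h_K^{k+2}|u|_{k+2,K}$. Writing $u - u_h^* = (u - \tilde u) + (\tilde u - u_h^*)$, the first term is already of order $h^{k+2}$ after summing over $K$, yielding the $C h^{k+2}\norm{u}_{k+2,\Omega}$ contribution. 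For the second term $w := \tilde u - u_h^* \in \mathcal{P}_{k+1}(K)$ I would subtract the defining equations: by \cref{eq:post-processing-1} and the exact relation above,
\begin{equation*}
  (\nabla w, \nabla v)_K = (\sigma^{-1}(\qb_h - \qb), \nabla v)_K
  = (\sigma^{-1}(\qb_h - \qb), \nabla v)_K \quad \forall v \in \mathcal{P}_{k+1}(K),
\end{equation*}
and $(w,1)_K = (\tilde u - u_h, 1)_K = (u - u_h, 1)_K$ using \cref{eq:post-processing-2} and the mean condition on $\tilde u$.

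\textbf{Estimating $w$.} Choosing $v = w$ in the gradient identity and using a local Poincar\'e inequality $\norm{w - \bar w}_K \le C h_K \norm{\nabla w}_K$ (with $\bar w$ the mean of $w$), I get $\norm{\nabla w}_K \le C \norm{\sigma^{-1}(\qb - \qb_h)}_K \le C \norm{\qb - \qb_h}_{\Vb_h,K}$ after absorbing the bounded factor $\sigma^{-1}$; then $\norm{w - \bar w}_K \le C h_K \norm{\qb - \qb_h}_{\Vb_h,K}$. The mean $\bar w$ is controlled by $\norm{w}_K \le \norm{w - \bar w}_K + |K|^{1/2}|\bar w|$ and $|\bar w| = |K|^{-1}|(w,1)_K| = |K|^{-1}|(u - u_h, 1)_K| \le |K|^{-1/2}\norm{u - u_h}_K$. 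Here is where I would invoke the superconvergence of $\norm{\Pi_W u - u_h}_{\Omega}$ from \cref{thm:duality-argument}: writing $u - u_h = (u - \Pi_W u) + (\Pi_W u - u_h)$, the first part is $O(h^{k+1})$ and the second is bounded by $Ch(\norm{\qb - \qb_h}_{\Vb_h} + \norm{\qb - \Pi_{\Vb}\qb}_{\Omega})$ via \cref{eq:superconvergence}. Summing $\norm{w}_K^2$ over $K$ and combining all three contributions gives the stated estimate.

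\textbf{The main obstacle} I anticipate is bookkeeping the mean-value term $\bar w$ correctly so that it inherits the superconvergent rate rather than degrading to $O(h)$ on its own; the clean route is to feed $\norm{u - u_h}_{\Omega}$ through \cref{eq:superconvergence} and \cref{eq:u-err-estm}, noting that the $(u - \Pi_W u)$ part is higher order in $h$ and the $(\Pi_W u - u_h)$ part already carries the factor $h$. A secondary technical point is ensuring the local Poincar\'e constant is uniform in $K$ under the mesh regularity assumption, which follows by scaling to a reference element; since the paper works with simplices and the interpolation estimates \cref{eq:interpolationestimate-1}--\cref{eq:interpolationestimate-2} already presume shape-regularity, I would simply cite this. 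Everything else is routine Cauchy--Schwarz and triangle-inequality manipulation.
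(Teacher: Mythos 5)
Your overall route is essentially the paper's: your elementwise splitting of $\tilde u - u_h^*$ into its mean $\bar w$ and the mean-free remainder $w - \bar w$ is exactly the paper's splitting of $\Pi u - u_h^*$ into $P_0(\Pi u - u_h^*)$ and $(I-P_0)(\Pi u - u_h^*)$, where $P_0$ is the elementwise-average projection; the mean-free part is handled in both cases by testing the post-processing equation against itself and applying an elementwise Poincar\'e inequality (your version is in fact marginally cleaner, since taking $\tilde u$ to be the local $H^1$-projection makes the analogue of the $\nabla(u - \Pi u)$ term vanish by Galerkin orthogonality), and the mean part is in both cases passed to the duality result of \cref{thm:duality-argument}.

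The gap is in the mean term, precisely where you flagged your ``main obstacle'', and your proposed resolution does not work. You bound $|K|^{1/2}|\bar w| \le \norm{u - u_h}_K$ and split $u - u_h = (u - \Pi_W u) + (\Pi_W u - u_h)$, asserting that the first piece is ``higher order in $h$''. It is not: $\Pi_W$ maps into $\mathcal{P}_k(K)$, so by \cref{eq:interpolationestimate-2} the best available bound is $\norm{u - \Pi_W u}_{\Omega} \le C h^{k+1} \norm{u}_{k+1,\Omega}$, which is one full power of $h$ short of the $O(h^{k+2})$ scale of every other contribution (recall that $\norm{\qb - \qb_h}_{\Vb_h}$ and $\norm{\qb - \Pi_{\Vb}\qb}_{\Omega}$ are themselves generically $O(h^{k+1})$, so $h$ times them is $O(h^{k+2})$). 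Carrying this term through, your final estimate contains an extra $C h^{k+1}\norm{u}_{k+1,\Omega}$, i.e., you prove only the optimal rate and not superconvergence --- a strictly weaker statement than the theorem. The repair is the observation the paper uses when it writes $P_0 u = P_0 \Pi_W u$: since $k \ge 1$, constants belong to $\mathcal{P}_{k-1}(K)$, so the moment condition \cref{eq:intp2} gives $(u - \Pi_W u, 1)_K = 0$. Hence $(w,1)_K = (u - u_h, 1)_K = (\Pi_W u - u_h, 1)_K$, the troublesome piece never enters, and summing over elements yields $\sum_K \norm[0]{\bar w}_K^2 \le \norm{\Pi_W u - u_h}_{\Omega}^2$, which superconverges by \cref{eq:superconvergence}. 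With that one-line correction your argument is complete and coincides in substance with the paper's proof.
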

\begin{proof}
  Let
  $\mathcal{P}_m(\mathcal{T}_h) := \cbr[0]{v \in L^2(\Omega) \,:\,
    v|_K \in \mathcal{P}_m(K) \ \forall K \in \mathcal{T}_h}$. Let
  $\Pi: H^1(\Omega) \rightarrow \mathcal{P}_{k+1} (\mathcal{T}_h)$ be
  an interpolation such that $P_0 \Pi = P_0$ and
  $\norm{ v - \Pi v }_{\Omega} + h \norm{ \nabla v - \nabla \Pi
    v}_{\Omega} \le C h^{k+2} \norm{ v }_{k+2, \Omega}$, where $P_0$
  is the $L^2$ projection to $\mathcal{P}_0(\mathcal{T}_h)$.  By the
  triangle inequality, it suffices to estimate
  $\norm{\Pi u - u_h^*}_{\Omega}$. Again by the triangle inequality,
  it is sufficient to estimate $\norm{P_0 (\Pi u - u_h^*)}_{\Omega}$
  and $\norm{ (I - P_0) (\Pi u - u_h^*)}_{\Omega}$.

  Since $P_0 (\Pi u - u_h^*) = P_0 u - P_0 u_h = P_0 (\Pi_W u - u_h)$
  by \cref{eq:post-processing-2} and the definition of $\Pi_W$, we
  find
  $\norm{P_0 (\Pi u - u_h^*) }_{\Omega} \le C h \del[1]{ \norm{ \qb -
      \qb_h}_{\Vb_h} + \norm{ \qb - \Pi_{\Vb}\qb}_{\Omega} }$ by
  \cref{thm:duality-argument}.

  To estimate $\norm{ (I - P_0) (\Pi u - u_h^*)}_{\Omega}$, recall
  that $\qb = - \sigma \nabla u$. By \cref{eq:post-processing-1} we
  have
  \begin{equation*}
    \del[0]{ \nabla( u - u_h^*), \nabla v}_{\Omega} = - \del[0]{ \sigma^{-1} (\qb - \qb_h), \nabla v}_{\Omega} ,
    \qquad v \in \mathcal{P}_{k+1}(\mathcal{T}_h).
  \end{equation*}
  Taking $v = (I - P_0) (\Pi u - u_h^*)$, we obtain
  \begin{equation*}
    \begin{split}
      \norm{\nabla (I - P_0)  (\Pi u - u_h^*)}_{\Omega}^2
      =& - \del[0]{ \nabla (u - \Pi u), \nabla (I - P_0)  (\Pi u - u_h^*)}_{\Omega}
      \\
      &- \del[0]{ \sigma^{-1} (\qb - \qb_h), \nabla (I - P_0)  (\Pi u - u_h^*)}_{\Omega} .      
    \end{split}
  \end{equation*}
  The Cauchy--Schwarz inequality and the assumption on $\Pi$ gives
  \begin{align*}
      \norm{\nabla (I - P_0)  (\Pi u - u_h^*)}_{\Omega}
      \le& \norm{ \nabla (u - \Pi u)}_{\Omega} +  \norm{ \sigma^{-1} (\qb - \qb_h)}_{\Omega}  \\
      \le& h^{k+1} \norm{u}_{k+2, \Omega} + C \norm{ \qb - \qb_h }_{\Vb_h} .  
  \end{align*}
  The desired estimate follows by combining this estimate and the
  estimate
  \begin{equation*}
    \norm{(I - P_0)  (\Pi u - u_h^*)}_{\Omega} \le Ch \norm{\nabla (I - P_0)  (\Pi u - u_h^*)}_{\Omega}
  \end{equation*}
  obtained by an element-wise Poincare inequality. 
\end{proof}

\section{Numerical examples}
\label{sec:num_examples}
In this section we verify the analysis of \cref{sec:error_analysis}
and \cref{sec:superconvergence} via numerical experiments. All the
examples have been implemented using the NGSolve finite element
library \cite{Schoberl:2014}.

\subsection{The cavity problem}
\label{ss:cavity}
We consider the symmetric cavity problem of \cite[Section
6]{Chesnel:2013}.

Let $\Omega = (-1,1) \times (0,1)$, $\Omega_+ := (-1,0) \times (0,1)$,
and $\Omega_- := (0,1) \times (0,1)$. We choose the source term
$f \in L^2(\Omega)$ such that the exact solution to
\cref{eq:variationalForm} is given by
\begin{equation}
  \label{eq:exactsol}
  u(x_1, x_2) :=
  \begin{cases}
    ((x_1+1)^2 - (\sigma_++\sigma_-)^{-1}(2\sigma_+ + \sigma_-)(x_1+1))\sin(\pi x_2) & \text{on}\ \Omega_+,
    \\
    (\sigma_+ + \sigma_-)^{-1}\sigma_+(x_1 - 1)\sin(\pi x_2) & \text{on}\ \Omega_-.
  \end{cases}
\end{equation}
We set $\sigma_+ = 1$ and $\kappa_{\sigma} = -1.001$.

We consider both a symmetric and a non-symmetric mesh, see
\cref{fig:meshes}.
\begin{figure}
  \centering
  \subfloat[A symmetric mesh. \label{fig:mesh_sym}]{\includegraphics[width=0.48\textwidth]{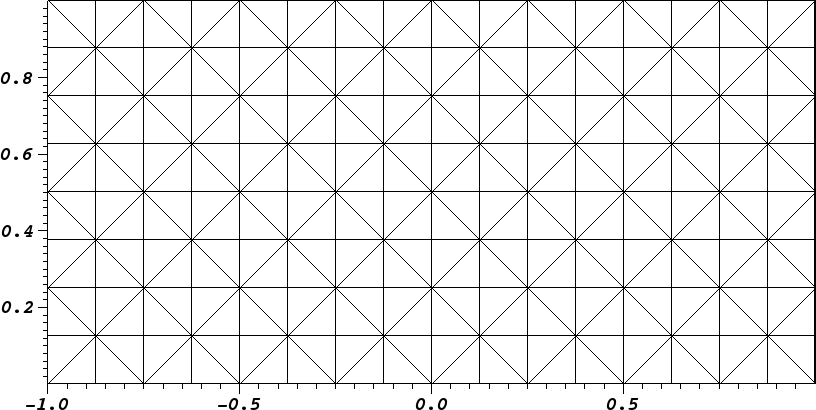}}
  \quad
  \subfloat[A non-symmetric mesh. \label{fig:mesh_nonsym}]{\includegraphics[width=0.48\textwidth]{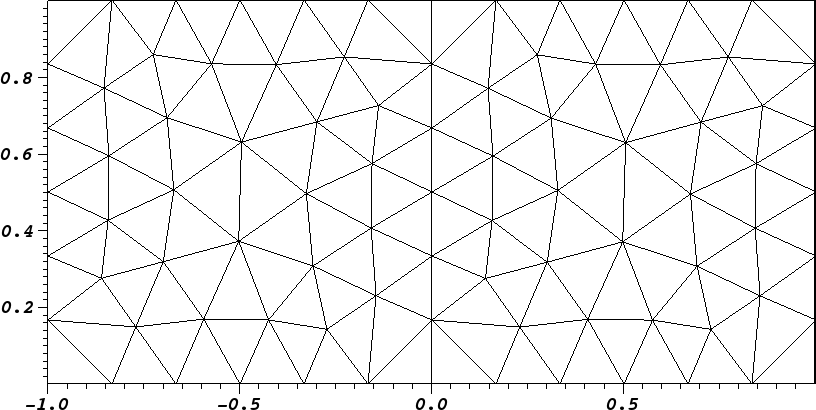}}
  \caption{A symmetric mesh and non-symmetric mesh used in the numerical examples described in~\cref{sec:num_examples}.}
  \label{fig:meshes}
\end{figure}
We will compare the results obtained by the HDG method
\cref{eq:hdgmethod} to those obtained by an $H^1$-conforming finite
element method (CG) as analyzed in \cite{Chesnel:2013}. For the HDG
method the penalty parameter is set to $\tau = 1$ on $\Omega_+$ and
$\tau = -1$ on $\Omega_-$. We take $k=0,1,2,3$ as polynomial degree.

On the symmetric mesh we observe from \cref{tab:symmetricmesh-hdg}
that the primal variable computed using the $H^1$-conforming finite
element method converges optimally in the $L^2$-norm for $k \ge
1$. Optimal rates of convergence in the $L^2$-norm are obtained with
the HDG method for the primal variable as well as the vector variable
for $k \ge 0$. Furthermore, we observe superconvergence in the
post-processed primal variable for $k \ge 1$ when the HDG method is
used. These HDG results verify our analysis in
\cref{sec:error_analysis} and \cref{sec:superconvergence}.

On the non-symmetric mesh we observe from
\cref{tab:nonsymmetricmesh-hdg} irregular rates of convergence in the
$L^2$-norm in the primal variable when using the $H^1$-conforming
finite element method for $k=1$ and $k=2$. Similar behaviour is
observed also in \cite{Chesnel:2013}. Near optimal rates of
convergence in the $L^2$-norm are obtained with this method for
$k=3$. Using the HDG method we observe optimal rates of convergence in
the $L^2$-norm for the errors in $u_h$ and $\boldsymbol{q}_h$ and
superconvergence for the error in $u_h^*$ for $k=2$ and $k=3$. The
rates of convergence in the $L^2$-norm using the HDG method when $k=0$
or $k=1$ show the same irregular behaviour as when using the
$H^1$-conforming finite element method. We remark, though, that the
HDG method always shows a smaller error on the same mesh compared to
the $H^1$-conforming finite element method.

In \cref{fig:nonsymm_contour} we compare the solution $u_h$ computed
using the $H^1$-conforming finite element method and HDG method with
the exact solution for different values of $\kappa_{\sigma}$. We use
$k=2$ and compute the solution on a non-symmetric mesh consisting of
8960 simplicial elements. We observe that the closer $\kappa_{\sigma}$
is to $-1$ (for which the problem is not well-posed), the worse the
$H^1$-conforming finite element compares to the exact solution. The
HDG method compares well with the exact solution for all tested values
of $\kappa_{\sigma}$. We observe the same behaviour in
\cref{fig:nonsymm_slice} in which we plot the solution along the line
$x_2=0.5$. We indeed saw in \cref{tab:nonsymmetricmesh-hdg} that the
HDG method outperforms the $H^1$-conforming finite element method on
the non-symmetric mesh.

\begin{table}
  {\small
  \centering {
    \begin{tabular}{c|cccccccc}
      \hline
      Cells & CG $e_u$ & r & HDG $e_u$ & r & HDG $e_{\boldsymbol{q}}$ & r & HDG $e_{u^*}$ & r \\
      \hline
      \hline
            & \multicolumn{8}{c}{$k=0$} \\
      \hline
   256 & - & -  & 9.9e+0 & -   & 1.9e+2 & -   & 9.9e+0 & -    \\ 
  1024 & - & -  & 5.0e+0 & 1.0 & 9.9e+1 & 1.0 & 5.0e+0 & 1.0 \\ 
  4096 & - & -  & 2.5e+0 & 1.0 & 5.0e+1 & 1.0 & 2.5e+0 & 1.0 \\ 
 16384 & - & -  & 1.3e+0 & 1.0 & 2.5e+1 & 1.0 & 1.3e+0 & 1.0 \\ 
 65536 & - & -  & 6.4e-1 & 1.0 & 1.3e+1 & 1.0 & 6.4e-1 & 1.0 \\ 
262144 & - & -  & 3.2e-1 & 1.0 & 6.3e+0 & 1.0 & 3.2e-1 & 1.0 \\ 
      \hline
            & \multicolumn{8}{c}{$k=1$} \\
      \hline
   256  & 1.0e+1 & -    & 6.9e+0 & -   & 1.5e+1 & -   & 2.6e-1 & -    \\ 
  1024  & 2.5e+0 & 2.0  & 1.7e+0 & 2.0 & 3.9e+0 & 2.0 & 3.3e-2 & 3.0 \\ 
  4096  & 6.3e-1 & 2.0  & 4.1e-1 & 2.0 & 9.7e-1 & 2.0 & 4.2e-3 & 3.0 \\ 
 16384  & 1.6e-1 & 2.0  & 1.0e-1 & 2.0 & 2.4e-1 & 2.0 & 5.3e-4 & 3.0 \\ 
 65536  & 3.9e-2 & 2.0  & 2.5e-2 & 2.0 & 6.1e-2 & 2.0 & 6.6e-5 & 3.0 \\ 
262144  & 9.8e-3 & 2.0  & 6.4e-3 & 2.0 & 1.5e-2 & 2.0 & 8.3e-6 & 3.0 \\ 
      \hline
            & \multicolumn{8}{c}{$k=2$} \\
      \hline
   256 & 2.9e-1 & -   & 2.5e-1 & -   & 5.3e-1 & -   & 7.3e-3 & -    \\ 
  1024 & 3.7e-2 & 3.0 & 3.1e-2 & 3.0 & 6.7e-2 & 3.0 & 4.6e-4 & 4.0 \\ 
  4096 & 4.6e-3 & 3.0 & 3.8e-3 & 3.0 & 8.5e-3 & 3.0 & 2.9e-5 & 4.0 \\ 
 16384 & 5.8e-4 & 3.0 & 4.7e-4 & 3.0 & 1.1e-3 & 3.0 & 1.8e-6 & 4.0 \\ 
 65536 & 7.2e-5 & 3.0 & 5.9e-5 & 3.0 & 1.3e-4 & 3.0 & 1.1e-7 & 4.0 \\ 
262144 & 9.0e-6 & 3.0 & 7.3e-6 & 3.0 & 1.7e-5 & 3.0 & 7.1e-9 & 4.0 \\
      \hline
            & \multicolumn{8}{c}{$k=3$} \\
      \hline
   256 & 7.3e-3 & -   & 7.1e-3 & -   & 1.5e-2 & -   & 1.6e-4  & -    \\ 
  1024 & 4.6e-4 & 4.0 & 4.4e-4 & 4.0 & 9.3e-4 & 4.0 & 5.1e-6  & 5.0 \\ 
  4096 & 2.9e-5 & 4.0 & 2.7e-5 & 4.0 & 5.8e-5 & 4.0 & 1.6e-7  & 5.0 \\ 
 16384 & 1.8e-6 & 4.0 & 1.7e-6 & 4.0 & 3.6e-6 & 4.0 & 5.0e-9  & 5.0 \\ 
 65536 & 2.2e-7 & 3.0 & 1.0e-7 & 4.0 & 2.3e-7 & 4.0 & 1.9e-10 & 4.7 \\ 
      \hline            
    \end{tabular}
  } }
\caption{Rates of convergence in the $L^2$-norm of CG and HDG on a
  symmetric mesh for different polynomial degree $k$, see
  \cref{ss:cavity}.}
  \label{tab:symmetricmesh-hdg}
\end{table}

\begin{table}
  {\small
  \centering {
    \begin{tabular}{c|cccccccc}
      \hline
      Cells & CG $e_u$ & r & HDG $e_u$ & r & HDG $e_{\boldsymbol{q}}$ & r & HDG $e_{u^*}$ & r \\
      \hline
      \hline
            & \multicolumn{8}{c}{$k=0$} \\
      \hline
   140 & - & -  & 3.0e+2 & -   & 1.5e+3 & -   & 3.0e+2 & -    \\ 
   560 & - & -  & 2.7e+2 & 0.2 & 3.3e+3 & -1.1 & 2.7e+2 & 0.2 \\ 
  2240 & - & -  & 3.0e+2 & -0.2 & 2.4e+3 & 0.4 & 3.0e+2 & -0.2 \\ 
  8960 & - & -  & 2.8e+2 & 0.1 & 2.2e+3 & 0.1 & 2.8e+2 & 0.1 \\ 
 35840 & - & -  & 4.4e+3 & -4.0 & 5.2e+4 & -4.6 & 4.4e+3 & -4.0 \\ 
143360 & - & -  & 1.8e+2 & 4.6 & 5.8e+3 & 3.2 & 1.8e+2 & 4.6 \\ 
573440 & - & -  & 1.6e+1 & 3.5 & 5.6e+2 & 3.4 & 1.6e+1 & 3.5 \\
      \hline
            & \multicolumn{8}{c}{$k=1$} \\
      \hline
   140  & 2.7e+3 & -     & 2.5e+1 & -   & 2.4e+2 & -   & 2.2e+1 & -    \\ 
   560  & 1.3e+2 & 4.3   & 7.6e+1 & -1.6 & 1.1e+3 & -2.3 & 7.6e+1 & -1.8 \\ 
  2240  & 2.5e+1 & 2.4   & 1.5e+0 & 5.7 & 3.6e+1 & 5.0 & 1.2e+0 & 6.0 \\ 
  8960  & 1.2e+1 & 1.1   & 8.7e-1 & 0.8 & 3.3e+1 & 0.1 & 8.5e-1 & 0.5 \\ 
 35840  & 6.1e+0 & 1.0   & 1.2e+0 & -0.4 & 7.9e+1 & -1.3 & 1.2e+0 & -0.5 \\ 
143360  & 2.1e+0 & 1.5   & 1.3e-2 & 6.5 & 4.2e-1 & 7.5 & 2.4e-3 & 8.9 \\ 
      \hline
            & \multicolumn{8}{c}{$k=2$} \\
      \hline
   140 & 2.3e+2 & -     & 1.1e+0 & -   & 2.0e+1 & -   & 9.1e-1 & -    \\ 
   560 & 1.0e+3 & -2.1  & 1.0e-1 & 3.3 & 3.1e+0 & 2.7 & 5.5e-2 & 4.0 \\ 
  2240 & 7.8e-1 & 10.3  & 1.1e-2 & 3.2 & 2.6e-1 & 3.6 & 2.7e-3 & 4.3 \\ 
  8960 & 7.9e-2 & 3.3   & 1.4e-3 & 3.0 & 1.8e-2 & 3.8 & 9.5e-5 & 4.9 \\ 
 35840 & 5.2e-2 & 0.6   & 1.7e-4 & 3.0 & 1.6e-3 & 3.5 & 4.1e-6 & 4.5 \\ 
143360 & 7.8e-4 & 6.1   & 2.2e-5 & 3.0 & 2.6e-4 & 2.6 & 3.8e-7 & 3.4 \\
      \hline
            & \multicolumn{8}{c}{$k=3$} \\
      \hline      
   140 & 7.1e-1 & -   & 3.0e-2 & -   & 8.6e-1 & -   & 2.0e-2  & -    \\ 
   560 & 7.0e-2 & 3.3 & 2.5e-3 & 3.6 & 8.5e-2 & 3.4 & 1.5e-3  & 3.8 \\ 
  2240 & 2.6e-3 & 4.7 & 1.3e-4 & 4.3 & 3.9e-3 & 4.5 & 3.5e-5  & 5.4 \\ 
  8960 & 7.9e-5 & 5.1 & 7.7e-6 & 4.1 & 1.7e-4 & 4.5 & 7.6e-7  & 5.5 \\ 
 35840 & 3.3e-6 & 4.6 & 4.8e-7 & 4.0 & 7.3e-6 & 4.5 & 1.6e-8  & 5.5 \\ 
143360 & 2.7e-7 & 3.6 & 3.0e-8 & 4.0 & 3.2e-7 & 4.5 & 7.1e-10 & 4.5 \\ 
      \hline            
    \end{tabular}
  } }
\caption{Rates of convergence in the $L^2$-norm of CG and HDG on a
  non-symmetric mesh for different polynomial degree $k$, see
  \cref{ss:cavity}.}
  \label{tab:nonsymmetricmesh-hdg}
\end{table}

\begin{figure}[tbp]
  \begin{center}
    \subfloat[$H^1$-FEM, $\kappa_{\sigma}=-1.001$.]{\includegraphics[width=.45\linewidth]{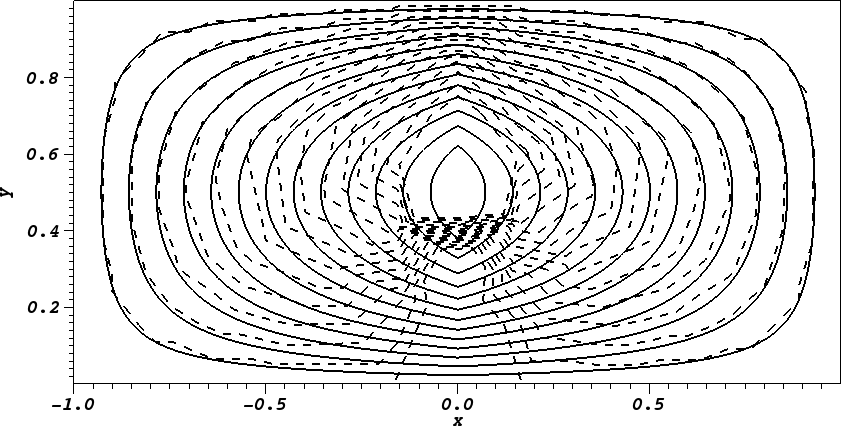}}
    \quad
    \subfloat[HDG, $\kappa_{\sigma}=-1.001$.]{\includegraphics[width=.45\linewidth]{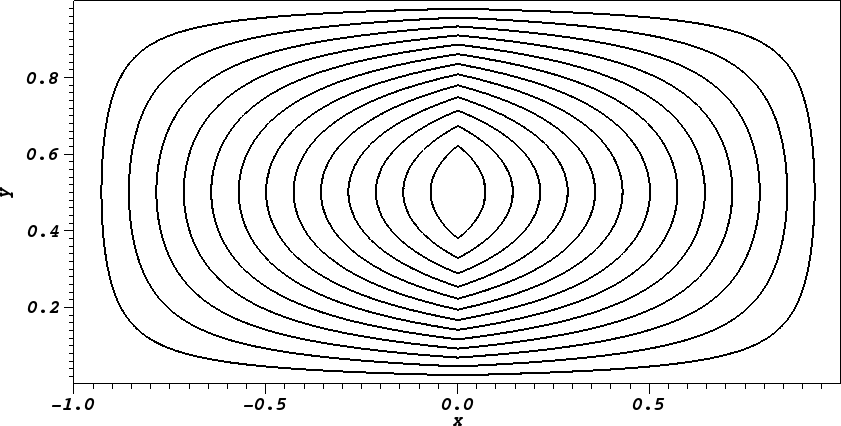}}
    \\
    \subfloat[$H^1$-FEM, $\kappa_{\sigma}=-1.1$.]{\includegraphics[width=.45\linewidth]{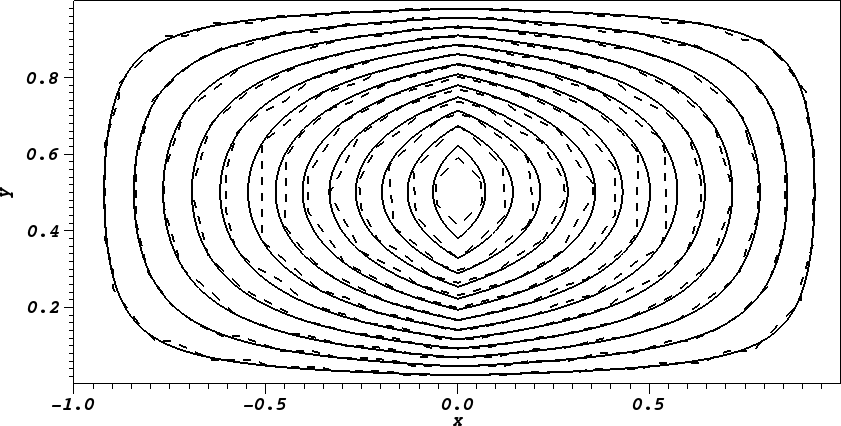}}
    \quad
    \subfloat[HDG, $\kappa_{\sigma}=-1.1$.]{\includegraphics[width=.45\linewidth]{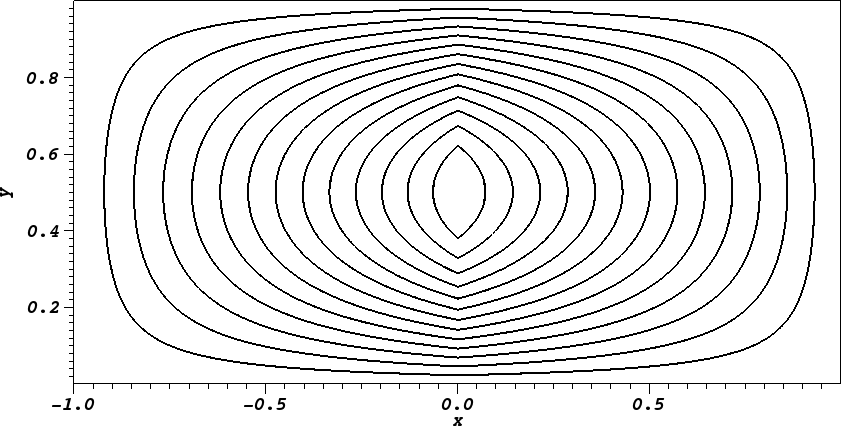}}
    \\
    \subfloat[$H^1$-FEM, $\kappa_{\sigma}=-2$.]{\includegraphics[width=.45\linewidth]{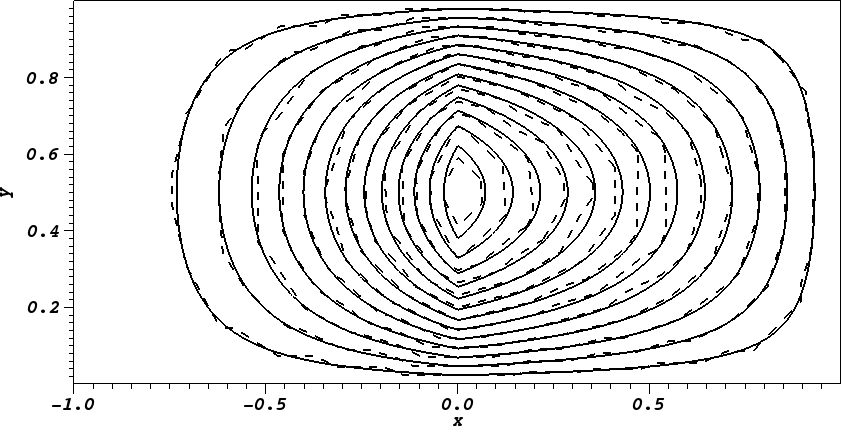}}
    \quad
    \subfloat[HDG, $\kappa_{\sigma}=-2$.]{\includegraphics[width=.45\linewidth]{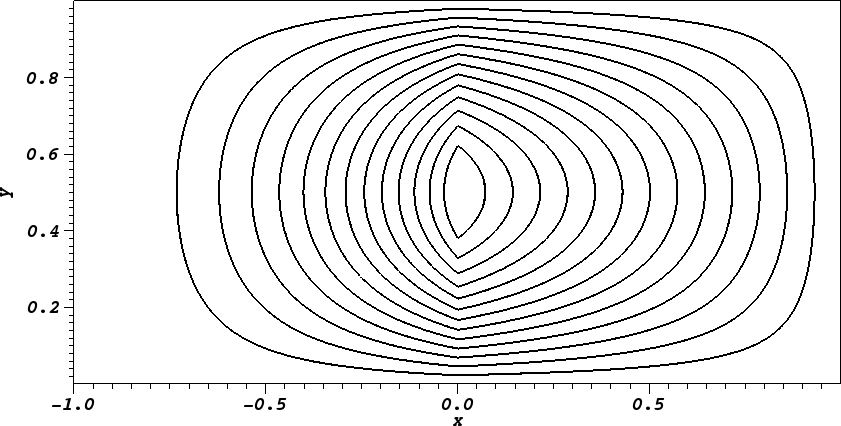}}
    \caption{A contour plot of $u_h$ comparing the $H^1$-conforming
      finite element method and HDG method (dashed lines) with the
      exact solution (solid lines) for different values of
      $\kappa_{\sigma}$, see \cref{ss:cavity}.}
    \label{fig:nonsymm_contour}
  \end{center}
\end{figure}

\begin{figure}[tbp]
  \begin{center}
    \subfloat[$H^1$-FEM, $\kappa_{\sigma}=-1.001$.]{\includegraphics[width=.45\linewidth]{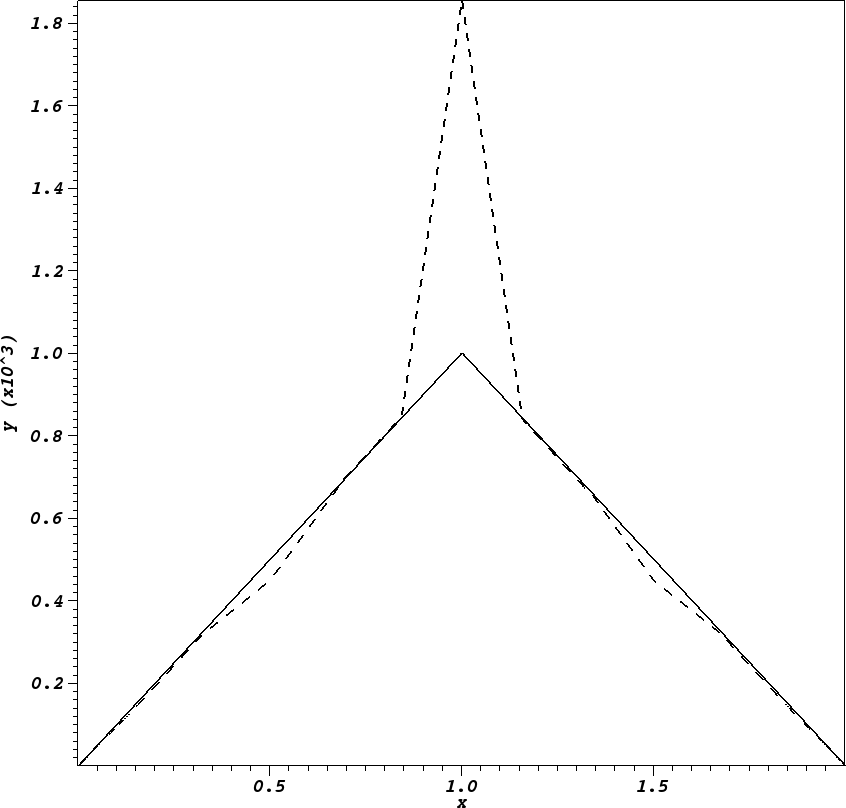}}
    \quad
    \subfloat[HDG, $\kappa_{\sigma}=-1.001$.]{\includegraphics[width=.45\linewidth]{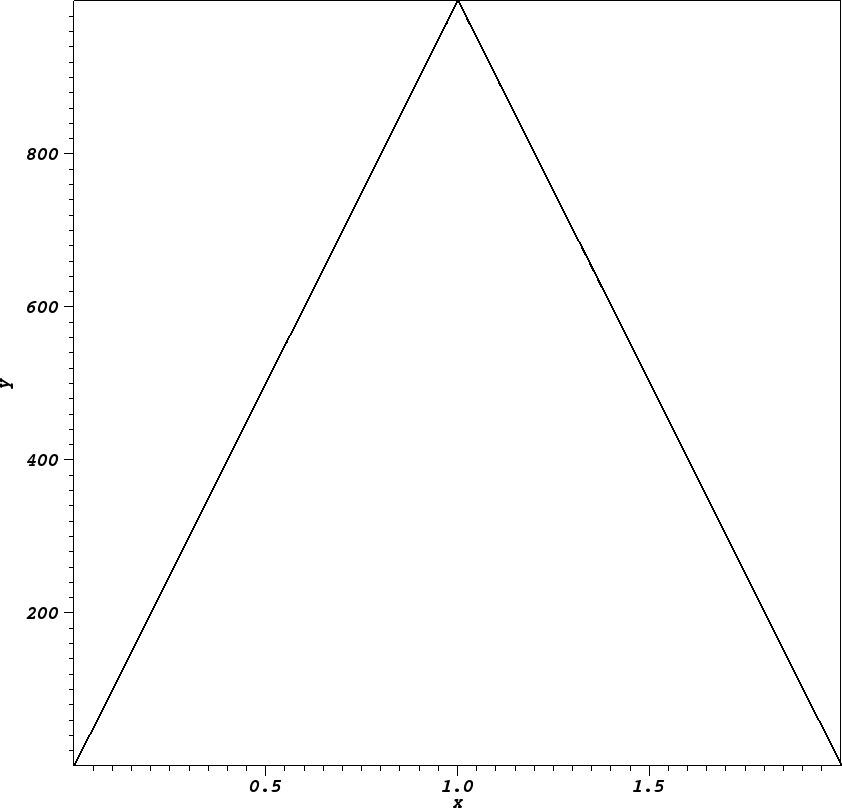}}
    \\
    \subfloat[$H^1$-FEM, $\kappa_{\sigma}=-1.1$.]{\includegraphics[width=.45\linewidth]{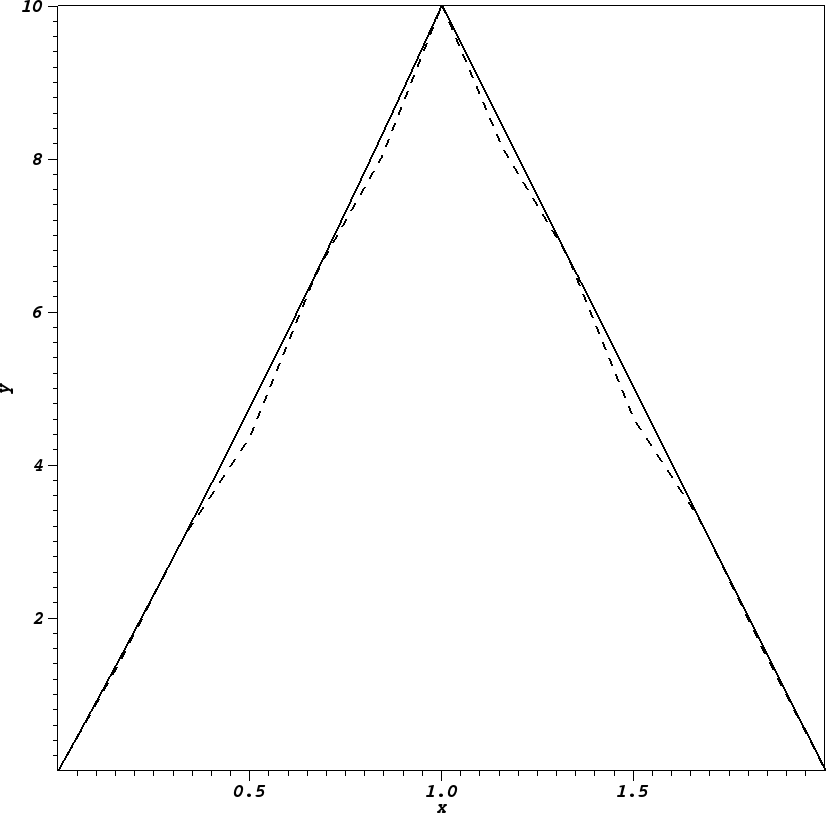}}
    \quad
    \subfloat[HDG, $\kappa_{\sigma}=-1.1$.]{\includegraphics[width=.45\linewidth]{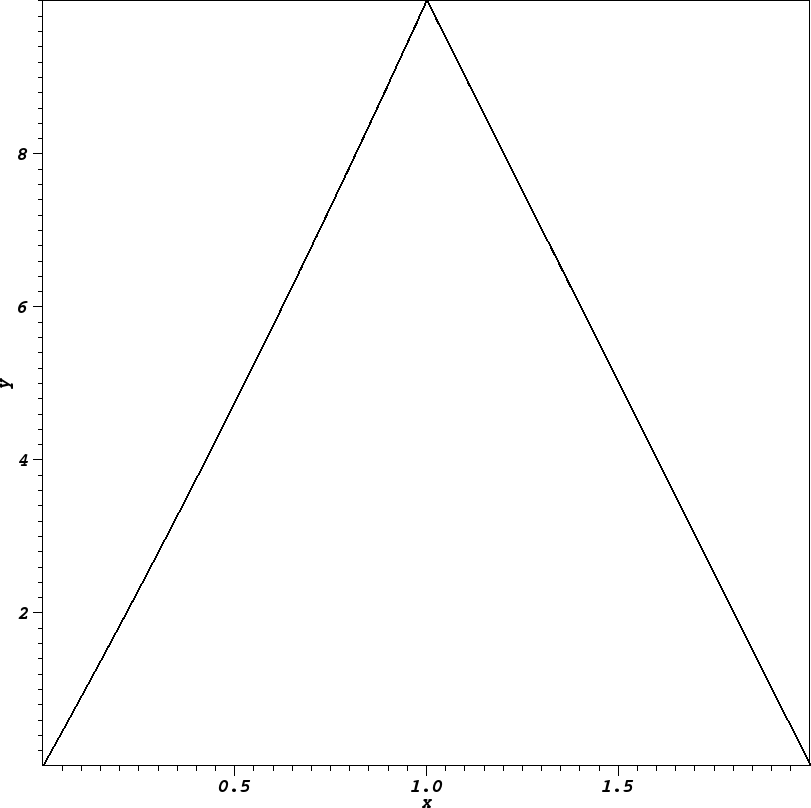}}
    \\
    \subfloat[$H^1$-FEM, $\kappa_{\sigma}=-2$.]{\includegraphics[width=.45\linewidth]{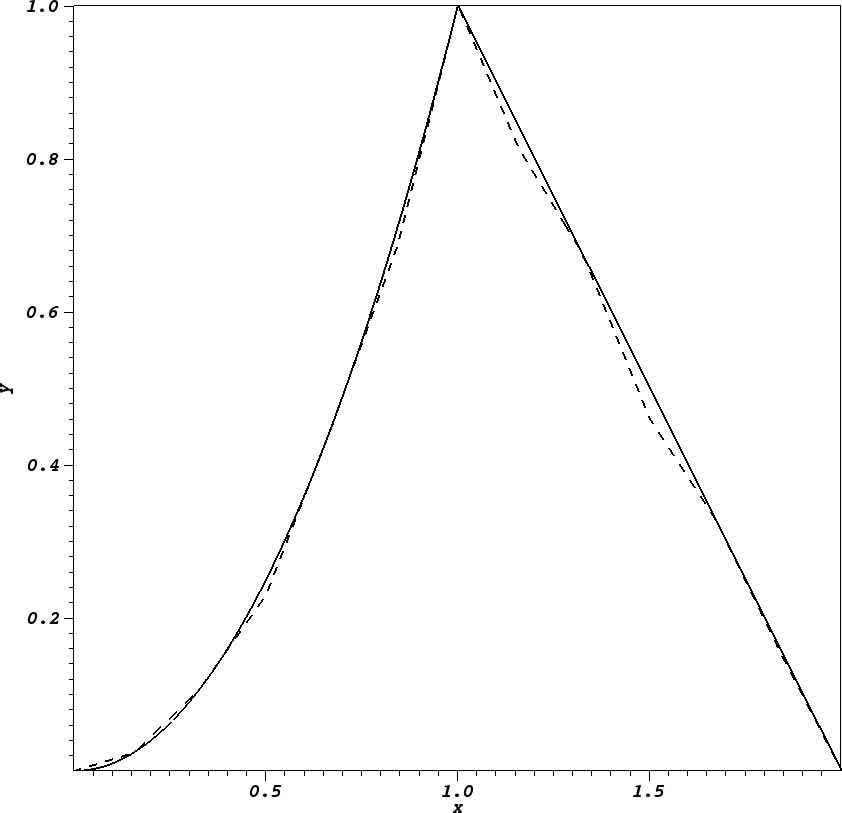}}
    \quad
    \subfloat[HDG, $\kappa_{\sigma}=-2$.]{\includegraphics[width=.45\linewidth]{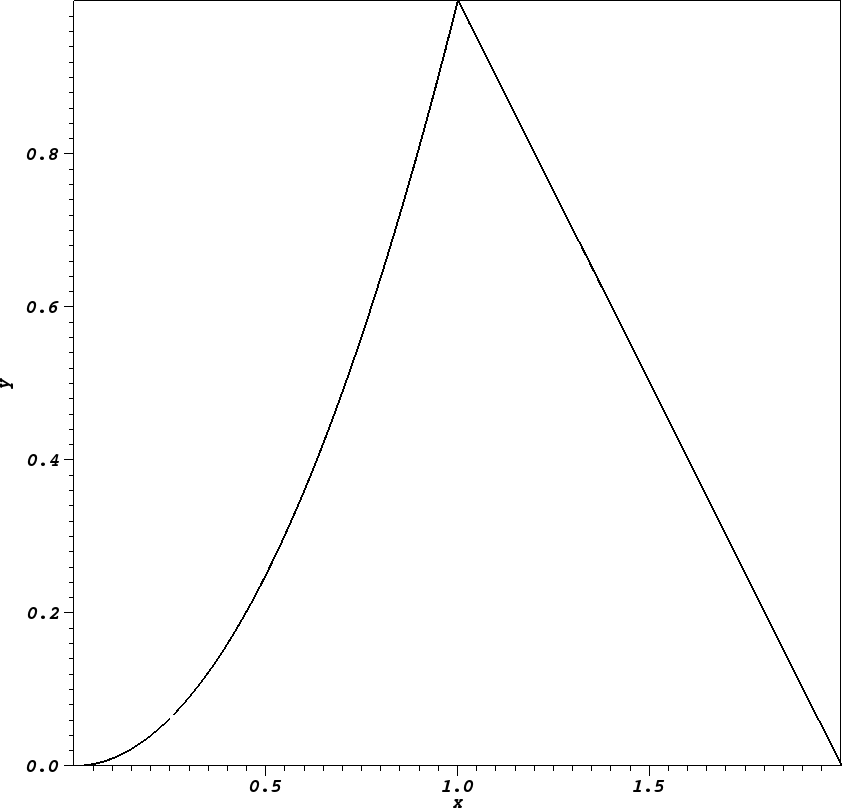}}
    \caption{Comparing the solution exact solution $u$ (solid lines)
      along the line $x_2=0.5$ with that computed using the
      $H^1$-conforming finite element method and the HDG method
      (dashed lines) for different values of $\kappa_{\sigma}$, see
      \cref{ss:cavity}.}
    \label{fig:nonsymm_slice}
  \end{center}
\end{figure}

\subsection{Meta-material layer}
\label{ss:metamat}
We next consider a problem motivated by that given in \cite[Section
6.1]{Chung:2013}. For this, let $\Omega = (0, 5) \times (0, 2)$. The
definition of $\Omega_+$ and $\Omega_-$ are shown in
\cref{fig:domain_plus_min_def}. For this setup it is known that
\cref{eq:problem} is well-posed if
$\kappa_{\sigma} \in (-\infty, 0) \backslash\sbr[0]{\kappa_{\min},
  \kappa_{\max}}$ where $\kappa_{\min} \approx -1.46$ and
$\kappa_{\max} \approx -0.69$, \cite[Section 3.2]{Chesnel:2013}.

\begin{figure}
\begin{center}
  \begin{tikzpicture}[
    important line/.style={thick},
    big dot/.style={circle, draw, inner sep=0pt, minimum size=3mm, fill=yellow},
    point/.style = {draw, circle,  ball color = blue, inner sep = 2pt, outer sep = 0pt, minimum size=2mm},
    pile/.style={thick, ->, >=stealth', shorten <=2pt, shorten  >=2pt},
    scale=1.5
    ]

    \draw[important line] (0,0)--(5,0);
    \draw[important line] (5,0)--(5,2);
    \draw[important line] (5,2)--(0,2);
    \draw[important line] (0,2)--(0,0);
    \draw[important line] (1,0)--(1.3,1);
    \draw[important line] (1.3,1)--(1,2);
    \draw[important line] (3,0)--(3.3,1);
    \draw[important line] (3.3,1)--(3,2);

    \draw[dashed] (1.3,0)--(1.3,0.1);
    \draw[dashed] (3.3,0)--(3.3,0.1);
    \draw[dashed] (0,1)--(0.1,1);
    
    \draw[->] (5,0)--(5.5,0);
    \draw[->] (0,2)--(0,2.5);

    \node[] at (0,2.6){$x_2$};
    \node[] at (5.7,0){$x_1$};

    \node[] at (0.7,1){$\Omega_+$};
    \node[] at (2.2,1){$\Omega_-$};
    \node[] at (4.2,1){$\Omega_+$};
    
    \node[] at (0, -0.2){0};
    \node[] at (1, -0.2){1};
    \node[] at (1.3, -0.2){1.3};
    \node[] at (3, -0.2){3};
    \node[] at (3.3, -0.2){3.3};
    \node[] at (5, -0.2){5};
    \node[] at (-0.2, 1){1};
    \node[] at (-0.2, 2){2};

  \end{tikzpicture}
\end{center}
\caption{The domain $\Omega = (0, 5) \times (0, 2)$ is subdivided in
  $\Omega_+$ and $\Omega_-$, see \cref{ss:metamat}.}
\label{fig:domain_plus_min_def}
\end{figure}
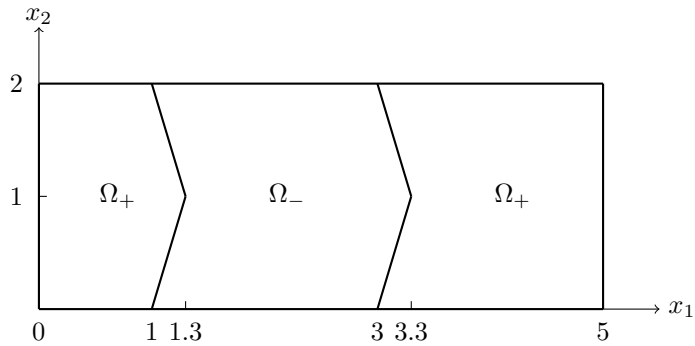

We set $\sigma_+ = 1$ and 
\begin{equation}
  f(x_1, x_2) :=
  \begin{cases}
    \sin(\pi x_2/2) & \text{in } \Omega_+ \text{ if } x_1 < 1.3, \\
    0 & \text{otherwise}.
  \end{cases}
\end{equation}
We consider three different values for $\kappa_{\sigma}$:
$\kappa_{\sigma} = -1.5$, $\kappa_{\sigma} = -1.6$, and
$\kappa_{\sigma} = -2$. 

We compute the solution using both the HDG method \cref{eq:hdgmethod}
and an $H^1$-conforming finite element method. For this we take $k=3$
and consider the solution on a mesh consisting of 24576 simplicial
elements. We plot and compare contour plots of $u_h$ obtained by both
methods in \cref{fig:contour_slice}. We observe that the closer
$\kappa_{\sigma}$ is to the interval
$\sbr[0]{\kappa_{\min}, \kappa_{\max}}$, the larger the difference in
solution between the HDG and $H^1$-conforming finite element
method. This is observed also when plotting the solution on the line
$x_2=1$. When $\kappa_{\sigma}=-1.5$ we observe a larger undershoot at
$x_1=3.3$ with the $H^1$-conforming finite element method than when
using the HDG method. When $\kappa_{\sigma} = -1.6$ and
$\kappa_{\sigma} = -2$, i.e., $\kappa_{\sigma}$ is further away from
the interval $\sbr[0]{\kappa_{\min}, \kappa_{\max}}$, the difference
in solution between both discretizations is smaller. These results are
in agreement with those observed in \cref{ss:cavity}.

\begin{figure}[tbp]
  \begin{center}
    \subfloat[Contour plot of $u_h$ for $\kappa_{\sigma}=-1.5$.]{\includegraphics[width=.65\linewidth]{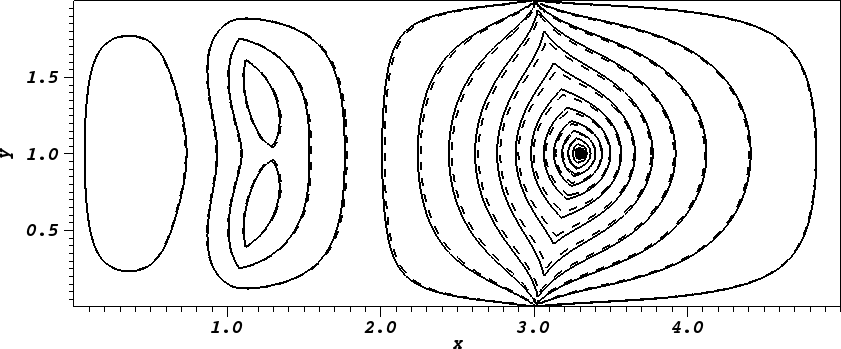}}
    \quad
    \subfloat[Slice through $x_2=1$ for $\kappa_{\sigma}=-1.5$.]{\includegraphics[width=.3\linewidth]{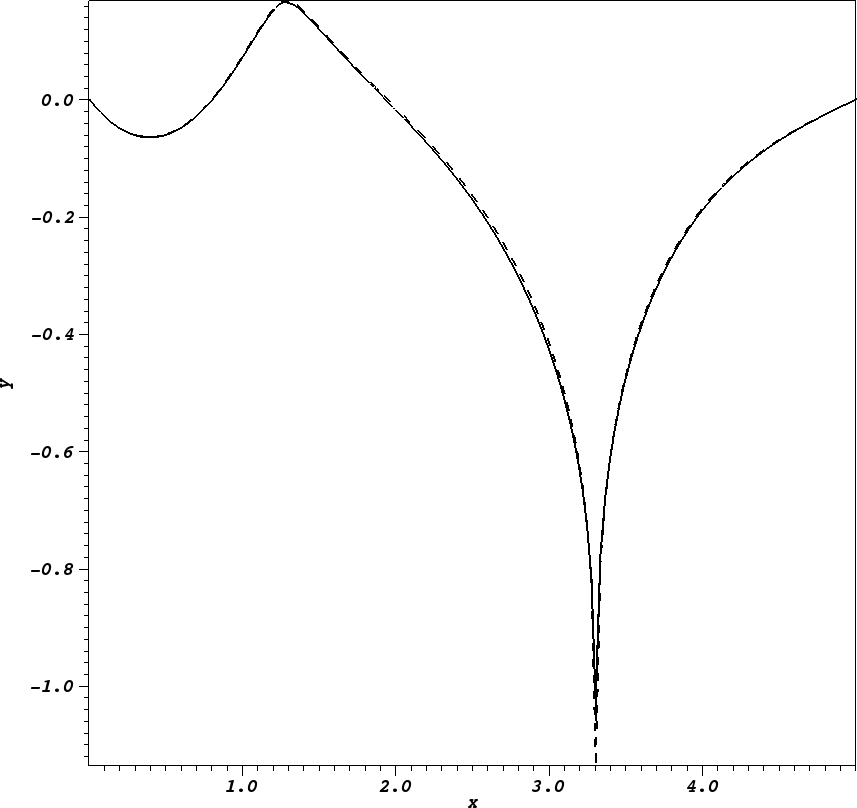}}
    \\
    \subfloat[Contour plot of $u_h$ for $\kappa_{\sigma}=-1.6$.]{\includegraphics[width=.65\linewidth]{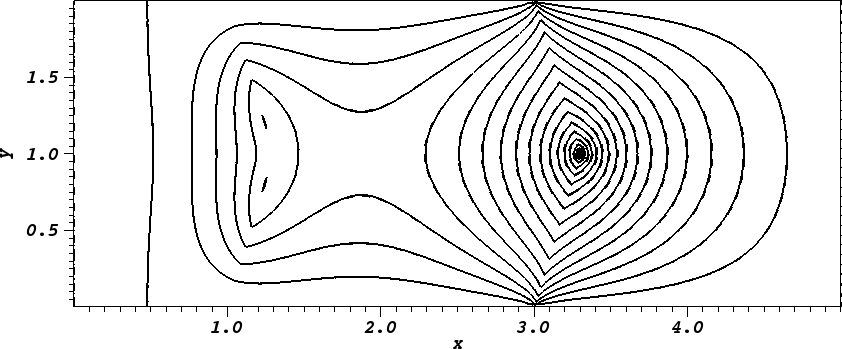}}
    \quad
    \subfloat[Slice through $x_2=1$ for $\kappa_{\sigma}=-1.6$.]{\includegraphics[width=.3\linewidth]{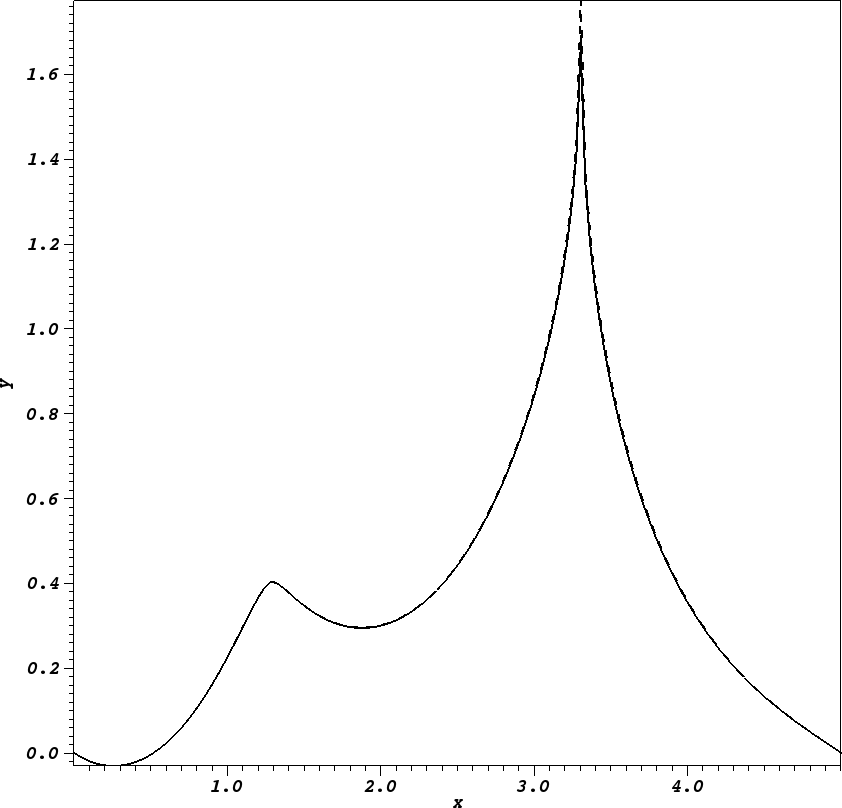}}
    \\
    \subfloat[Contour plot of $u_h$ for $\kappa_{\sigma}=-2$.]{\includegraphics[width=.65\linewidth]{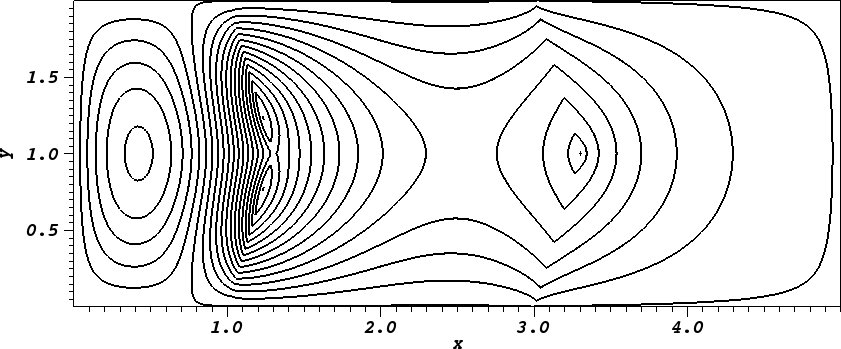}}
    \quad
    \subfloat[Slice through $x_2=1$ for $\kappa_{\sigma}=-2$.]{\includegraphics[width=.3\linewidth]{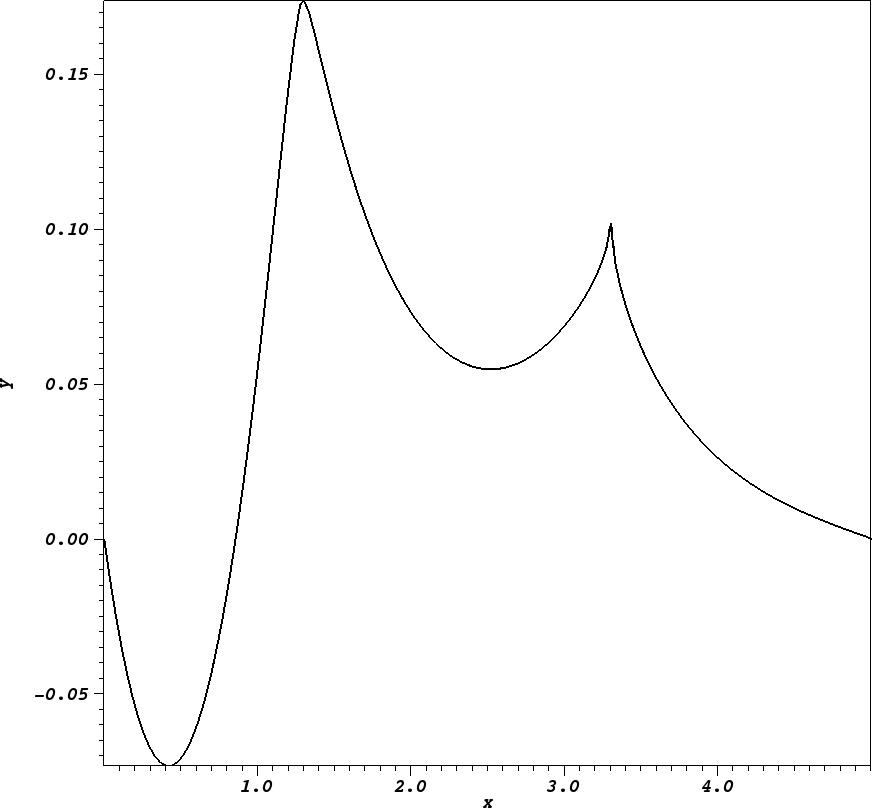}}
    \caption{A contour plot of $u_h$ a slice through $x_2=1$ comparing
      the solution obtained by the HDG method (solid lines) to an
      $H^1$-conforming finite element method (dashed lines) for
      different values of $\kappa_{\sigma}$, see \cref{ss:metamat}.}
    \label{fig:contour_slice}
  \end{center}
\end{figure}

\section{Conclusions}
\label{sec:conclusions}

In this paper we introduced an HDG method for the solution of
Poisson-type problems with a sign-changing coefficient. Well-posedness
of our method does not rely on conditions on domain geometry and the
ratio of the negative and positive coefficients. We presented a novel
error analysis of the HDG method that avoids the Aubin--Nitsche
duality argument. Under suitable regularity assumptions, a
superconvergence result is obtained by a duality argument. Numerical
experiments show that the HDG method outperforms an $H^1$-conforming
finite element method for this problem.

\bibliographystyle{plainnat}
\bibliography{references}
\end{document}